\documentclass[11pt]{amsart}

\usepackage{amscd,amsxtra,amssymb,mathrsfs, bbm}
\usepackage{stmaryrd}
\usepackage{amscd,amsxtra,amssymb, bbm, url}
\usepackage[new]{old-arrows}
\usepackage{enumitem}
\usepackage{hyperref}

\usepackage[paper=a4paper,left=25mm,right=25mm,top=30mm,bottom=30mm]{geometry}

\usepackage{tikz}
\usetikzlibrary{calc, arrows, positioning, shapes, fit, matrix, decorations}
\usetikzlibrary{decorations.shapes, decorations.pathreplacing}

\usepackage[all]{xy}

\newtheorem*{theorem*}{Theorem}
\newtheorem{theorem}{Theorem}[section]

\newtheorem{lemma}[theorem]{Lemma}
\newtheorem{proposition}[theorem]{Proposition}

\theoremstyle{definition}
\newtheorem{definition}[theorem]{Definition}
\newtheorem{remark}[theorem]{Remark}
\newtheorem{example}[theorem]{Example}

\numberwithin{equation}{section}

\DeclareMathOperator{\rk}{\mathrm{rk}}

\DeclareMathOperator{\cok}{\mathrm{cok}}
\renewcommand{\ker}{\mathsf{ker}}

\newcommand{\cone}{\mathsf{cone}}

\newcommand{\id}{\mathrm{id}}

\DeclareMathOperator{\Hom}{\mathsf{Hom}}

\DeclareMathOperator{\Ext}{\mathsf{Ext}}

\DeclareMathOperator{\Aut}{\mathsf{Aut}}

\DeclareMathOperator{\Ob}{\mathsf{Ob}}

\DeclareMathOperator{\lieg}{\mathfrak{g}}

\newcommand{\kk}{\mathbbm{k}}

\newcommand{\ZZ}{\mathbb{Z}}
\newcommand{\QQ}{\mathbb{Q}}

\newcommand{\FF}{\mathbb{F}}

\newcommand{\XX}{\mathbb{X}}

\newcommand{\kA}{\mathcal{A}}

\newcommand{\kB}{\mathcal{B}}

\newcommand{\kD}{\mathcal{D}}

\newcommand{\lar}{\longrightarrow}

\def\sD{\mathsf D}

\def\sH{\mathsf H}

\def\sK{\mathsf K}

\newcommand{\overr}{\overrightarrow}
\newcommand{\overl}{\overleftarrow}

\title[Double Hall algebras and derived equivalences revisited]{Double Hall algebras and derived equivalences revisited}

\author{Igor Burban}
\address{
Universität Paderborn, Institut für Mathematik, Warburger Str. 100, 33098 Paderborn
}
\email{burban@math.uni-paderborn.de}

\author{Daniel Perniok}
\address{
Universität Paderborn, Institut für Mathematik, Warburger Str. 100, 33098 Paderborn
}
\email{dperniok@math.uni-paderborn.de}

\begin{document}

\begin{abstract} Let $\kk$ be a finite field and $\kA, \kB$ be $\kk$-linear $\Ext$-finite hereditary abelian categories. A theorem of Cramer asserts that, under suitable assumptions, a derived equivalence 
$\kD^b(\kA) \!\longrightarrow\! \kD^b(\kB)$ between two such categories induces an algebra isomorphism of the corresponding double Hall algebras $\sD\sH_\kA \!\longrightarrow\! \sD\sH_\kB$. It turns out that a counting formula for certain distinguished triangles in $\kD^b(\kA)$, on which Cramer's proof relies, is incorrect in general. We give a corrected proof of Cramer's theorem which preserves the overall strategy of his approach. 
\end{abstract}

\maketitle

\renewcommand{\thefootnote}{}
\footnotetext{Date: August 25, 2026.} 
\renewcommand{\thefootnote}{\arabic{footnote}}

\section{Introduction}

Let $\kk=\FF_q$ be a finite field and $\widetilde\QQ = \QQ[\sqrt{q}]$. By a work of Ringel \cite{Ringel_HallAlgebrasAndQuantumGroups}, to any $\kk$-linear $\Ext$-finite hereditary abelian category $\kA$ over $\kk$ one can associate a $\widetilde\QQ$-algebra $\sH_\kA$ called the (extended, twisted) Hall algebra of $\kA$. If $\kA$ is a length category, then, by a work of Green, $\sH_\kA$ is a bialgebra \cite{Green_HallAlgebras} equipped with a bialgebra pairing $\sH_\kA \times \sH_\kA \stackrel{\varphi}\lar \widetilde\QQ$. Moreover, in this case $\sH_\kA$ is a Hopf algebra \cite{Xiao}. In general, $\sH_\kA$ is a topological bialgebra. Nevertheless, in this setting one can introduce the reduced Drinfeld double $\sD\sH_\kA$ of $\sH_\kA$ with respect to the pairing $\varphi$, called the double Hall algebra of $\kA$; see \cite[Appendix B]{BurbanSchiffmann_HallAlgebraEllipticCurve} for the basic properties of $\sD\sH_\kA$ in this generality. It turns out that $\sD\sH_\kA$ is well-behaved with respect to derived equivalences.

Let $\overr{\Delta}$ be a finite quiver without loops and oriented cycles,
$\diamond$ be a sink of $\overr{\Delta}$ and
$\overl{\Delta}$ be the quiver obtained from $\overr{\Delta}$ by inverting all the arrows
ending at $\diamond$ (thus, $\diamond$ is a source of $\overl{\Delta}$).
Then one has an adjoint pair of the so-called BGP-reflection
functors
$$
S_\diamond^+: \; \mathit{Rep}_{\kk}(\overr{\Delta}) \lar \mathit{Rep}_{\kk}(\overl{\Delta}) \quad \mbox{and} \quad S_{\diamond}^{-}: \; \mathit{Rep}_{\kk}(\overl{\Delta}) \lar \mathit{Rep}_{\kk}(\overr{\Delta})$$ which induce mutually inverse derived equivalences $$T_{\diamond}^{+} = RS_{\diamond}^+: \; \kD^b\bigl(\mathit{Rep}_{\kk}(\overr{\Delta})\bigr) \lar \kD^b(\mathit{Rep}_{\kk}\bigl(\overl{\Delta})\bigr)
$$
and $$
T_{\diamond}^{-} = LS_{\diamond}^-: \; \kD^b\bigl(\mathit{Rep}_{\kk}(\overl{\Delta})\bigr) \lar
\kD^b(\mathit{Rep}_{\kk}\bigl(\overr{\Delta})\bigr).
$$
Sevenhant and Van den Bergh proved in \cite{SevenhBergh} that $T_\diamond^\pm$ induce mutually inverse isomorphisms of the corresponding double Hall algebras $\sD\sH_{\overr{\Delta}} \longrightarrow \sD\sH_{\overl{\Delta}}$. Moreover, they categorify the so-called Lusztig symmetries \cite{Lusztig} of the quantized enveloping algebra of the associated Kac--Moody algebra $\lieg_{\Delta}$. This result was extended by Xiao and Yang \cite{XiaoYang} to the setting of arbitrary hereditary algebras.

Next, let $E$ be an elliptic curve over $\kk$ and $\mathit{Coh}(E)$ be the corresponding category of coherent sheaves. Then the derived category $\kD^b\bigl(\mathit{Coh}(E)\bigr)$ admits a rich group of auto-equivalences. Schiffmann and the first-named author proved in \cite{BurbanSchiffmann_HallAlgebraEllipticCurve} that any auto-equivalence of
$\kD^b\bigl(\mathit{Coh}(E)\bigr)$
induces an algebra automorphism of the corresponding double Hall algebra $\sD\sH_E$. This gives rise to an interesting action of a central extension of the modular group $\mathsf{SL}_2(\ZZ)$ on the elliptic Hall algebra $\sD\sH_E$.

Based on the aforementioned results, Schiffmann stated in \cite{OlivierNotes} a conjecture asserting that for any  $\kk$-linear $\Ext$-finite hereditary abelian categories $\kA$ and $\kB$, 
any ``friendly'' exact equivalence $F: \kD^b(\kA) \lar \kD^b(\kB)$ induces an isomorphism of double Hall algebras $F_\ast: \sD\sH_\kA \lar \sD\sH_\kB$ and suggested an explicit formula for $F_\ast$. In \cite{Cramer_DoubleHallAlgebras}, Cramer provided a solution to Schiffmann's conjecture under certain additional hypotheses on $\kA$ and $\kB$.

\begin{theorem*}[{\cite[Theorem 1]{Cramer_DoubleHallAlgebras}} and Theorem \ref{thm:CramerIsomorphism}]
Let $F: \kD^b(\kA)\longrightarrow\kD^b(\kB)$ be an equivalence of triangulated categories. Then, under appropriate additional hypotheses on $\kA, \kB$ and $F$, it  induces an algebra isomorphism between the corresponding reduced Drinfeld doubles, defined by
\begin{align}
\label{eq:CramerIsomorphism}
\nonumber
F_*:\qquad \sD\sH_\kA & \longrightarrow \sD\sH_\kB \\
[X]^\pm \cdot k_\alpha & \longmapsto \langle X,X \rangle^n [B_X]^{\pm\varepsilon(n)}k_{\varepsilon(n)n\cdot B_X}\cdot k_{F(\alpha)}
\end{align}
for $X\in\kA$ and $B_X\in\kB$ with $F(X)\cong B_X[n]$ and $\alpha\in\sK_0(\kA)$. Here, $\varepsilon(n)=(-1)^n$ and $\langle X,X\rangle = \sqrt{\dfrac{\big|\Hom_{\kA}(X, X)\big|}{\big|\Ext^1_{\kA}(X, X)\big|}}$.
\end{theorem*}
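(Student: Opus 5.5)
The plan follows Cramer's strategy. First replace $\sD\sH_\kA$ by a presentation via generators and relations that only refers to the derived category. By the standard description of the reduced Drinfeld double, $\sD\sH_\kA$ is generated by $\sH_\kA^+$, $\sH_\kA^-$ and the torus $\widetilde\QQ[\sK_0(\kA)]$. The relations are of three kinds: the Hall multiplication within each half; the torus relations $k_\alpha [X]^\pm k_\alpha^{-1} = (\alpha,X)^{\pm1}[X]^\pm$; and the Drinfeld double relations pairing $\sH^+$ with $\sH^-$ via Green's coproduct and the pairing $\varphi$.

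The key step is to rewrite every structure constant in terms of data intrinsic to $\kD^b(\kA)$. For the Hall product this means counting triangles $X\to Y\to Z\to X[1]$ with all terms in the heart, normalised by automorphism groups. For the double relations, one uses triangles involving objects in $\kA$ and in $\kA[1]$, or more generally in two adjacent shifts. For a hereditary category every object of $\kD^b(\kA)$ is a direct sum of shifted objects of $\kA$. Hence $F$ sends each $X\in\kA$ to $B_X[n]$, but the image of a whole triangle may straddle several shifts. So one must show that the relevant triangle counts are invariant under exact equivalences, and that they recover the Hall numbers and the Green pairing whenever the terms lie in a common "window" $\kB[n]\cup\kB[n+1]$.

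Concretely, I would prove a formula of the following type. For objects $A,C$ in $\kD^b(\kA)$ with $\Hom(A,C[i])=0$ for $i<0$ (a hereditary, two-term condition), the weighted number of triangles $C\to B\to A\to C[1]$ with fixed middle term $B$ equals $\frac{|\Ext^1(A,C)_B|}{|\Hom(A,C)|}$ up to automorphism factors. This is the place where the paper says Cramer's counting formula fails in general. The correction must account for the homomorphisms $\Hom(A,C)$, which act on the set of triangles with nontrivial stabilisers. I therefore expect this step to be the main obstacle.

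My approach there is to count morphisms $h\colon A\to C[1]$ with $\cone(h)\cong B[1]$, rather than counting triangles up to isomorphism. I would use the Toën/Xiao–Xu style formula: the number of triangles is $|\{h\}|\cdot|\Aut A|^{-1}|\Aut C|^{-1}$, times the correction $\prod_{i>0}|\Hom(A,C[-i])|^{(-1)^i}$. In the hereditary two-shift setting this product collapses to the factor $|\Hom(A,C)|^{-1}$. This quantity is manifestly invariant under $F$.

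With invariant counts in hand, I would define $F_*$ on generators by the stated formula. The powers $\langle X,X\rangle^n$ and the twist $k_{\varepsilon(n)n B_X}$ are chosen exactly to absorb the Euler-form discrepancies that arise each time an object is shifted, because $[X[1]]$ corresponds to $\langle X,X\rangle^{-1}[X]^- k_{-X}$-type expressions in the double. Next I would check that $F_*$ respects the relations. It suffices to do this for a generating set of equivalences, or, following Cramer, to use the hypotheses on $\kA,\kB$ to reduce to $F$ mapping the heart into $\kB\cup\kB[1]$, i.e. a tilt. In that case each defining relation of $\sD\sH_\kA$ maps, via the invariant triangle counts and the Euler form identity $\langle F\alpha,F\beta\rangle=\langle\alpha,\beta\rangle$, to a relation valid in $\sD\sH_\kB$. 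Composing handles general $F$ by splitting into such steps. Finally, applying the same construction to $F^{-1}$ yields an inverse, so $F_*$ is an isomorphism.
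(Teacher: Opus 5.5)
\textbf{Gap 1: the counting step.} Your instinct to count morphisms with a prescribed cone, rather than triangles, is right; this is the paper's $c_{X,Y}^{L[1]\oplus N}$. But the formula you attach to it fails exactly where it matters, and the identity that is actually needed is missing.

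The coefficients that break Cramer's argument are those of the Drinfeld relation, $\sum_{[M]}P^{Y}_{N,M}P^{X}_{M,L}/a_M$. The associated triangle $X\to Y\to L[1]\oplus N\to X[1]$ has third vertex straddling two shifts. The stabilisers that invalidate \eqref{E:KaFormula} are automorphisms $\bigl(\begin{smallmatrix}1&w\\ 0&1\end{smallmatrix}\bigr)$ of $L[1]\oplus N$ with $w\in\Ext^1_\kA(N,L)$ and $\pi^*w=0=\iota_*w$ (Example \ref{ex:counterexample}). They are not elements of $\Hom(A,C)$.

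Concretely, take $C=Y$, $B=L[1]\oplus N$, $A=X[1]$, which satisfies your vanishing hypothesis. Then $\Hom(A,C)=\Hom(X[1],Y)=0$, so your correction is trivial. Normalised by $|\Aut B|$ as in Riedtmann's formula, your count becomes $c_{X,Y}^{L[1]\oplus N}\,|\Aut(L[1]\oplus N)|$, which is precisely Kapranov's formula. Moreover, under your hypothesis $\prod_{i>0}|\Hom(A,C[-i])|^{(-1)^i}=1$, not $|\Hom(A,C)|^{-1}$.

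What is needed instead is the elementary identity $\sum_{[M]}P^Y_{N,M}P^X_{M,L}/a_M=a_La_N\,c_{X,Y}^{L[1]\oplus N}$ (Lemma \ref{lemma:cEqualSum}). It comes from counting four-term exact sequences $0\to L\to X\to Y\to N\to 0$ and removes triangles from the argument entirely.

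Even then, only $c$ is $F$-invariant, not the full coefficient $\frac{a_La_N}{a_Xa_Y}c$. For $X,Y\in\kA_0$, $F$ regroups $N=N_{-1}\oplus N_0$ and $L=L_0\oplus L_1$ into $N'=B_{N_0}\oplus B_{L_1}$ and $L'=B_{L_0}\oplus B_{N_{-1}}$. You must compute $a_La_N/(a_{L'}a_{N'})$ from \eqref{eq:orthogonal} (this is \eqref{eq:autNautL}). Then you must check that it cancels against the factors produced by $[L_0\oplus L_1]=\langle L_0,L_1\rangle^{-1}[L_0][L_1]$, by $[N_{-1}\oplus N_0]=\langle N_{-1},N_0\rangle^{-1}[N_{-1}][N_0]$, and by the twists in $F_*$. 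Invariance of counts together with $\langle F\alpha,F\beta\rangle=\langle\alpha,\beta\rangle$ does not do this for you.

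\textbf{Gap 2: the reduction.} The formula for $F_*$ only makes sense for $X$ lying in a single $\kA_n$. So you must show that the relations $\mathrm{R}([X],[Y])$ for arbitrary $X,Y\in\kA$ follow from those with $X\in\kA_i$, $Y\in\kA_j$. This is Proposition \ref{prop:reductionOfRelations}, proved from Lemma \ref{lemma:DrinfeldRelationMultiplicative} by induction on length, or on rank using $\mathit{Tor}(\XX)\subseteq\kA_1$. It is exactly where the hypotheses on $\kA$ and $\kB$ enter.

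After that, Lemma \ref{lemma:shiftFunctor} normalises $i,j$. One then checks three cases: the Hall relation for $j=i-1$, and the Drinfeld relation for $i=j$ and for $|i-j|=1$. The first and last of these turn a Hall relation in one category into a Drinfeld relation in the other.

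Your substitute, reducing to $F(\kA)\subseteq\kB\cup\kB[1]$ and composing, is unsupported:
\begin{enumerate}
\item In the length-category case the hypotheses put no bound on the range of shifts occupied by $F(\kA)$.
\item Nothing guarantees a factorisation through tilts whose intermediate hearts are hereditary and satisfy the hypotheses.
\item You would also need $(G\circ F)_*=G_*\circ F_*$.
\item Even for a single tilt, the relations for non-homogeneous $X,Y$ still require the reduction above.
\end{enumerate}
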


It turns out, however, that Cramer's proof contains a gap. His argument essentially relies on the following statement from a work of Kapranov \cite[Proposition 2.4.3]{Kapranov_HeisenbergDouble}.

\smallskip
\noindent
\textit{Statement}. For any objects $X, Y, Z \in \kD = \kD^b(\kA)$, let
\begin{align*}
c_{X,Y}^{Z}
&= \Big|\bigl\{f \in \Hom_{\kD}(X, Y)~\big\vert~\cone(f)\cong Z\bigr\}\Big|
\end{align*}
and
\begin{align*}
T_{X,Y}^{Z}
&= \Big|\bigl\{(f, g, h) ~\big\vert~ X \stackrel{f}\lar Y \stackrel{g}\lar Z \stackrel{h}\lar X[1] \; \text{is distinguished}\bigr\}\Big|.
\end{align*}
Then for any objects $L,X,Y,N\in \kA$ we have:
\begin{equation}\label{E:KaFormula}
T_{X,Y}^{L[1] \oplus N}=c_{X,Y}^{L[1] \oplus N} \big|\Aut_\kA(L)\big| \big|\Aut_\kA(N)\big|\big|\Ext^1_{\kA}(N, L)\big|.
\end{equation}
Unfortunately, the formula (\ref{E:KaFormula}) is in general incorrect, which invalidates the proof of \cite[Theorem 1]{Cramer_DoubleHallAlgebras}. For the same reason, the proof of \cite[Theorem 3.9]{BurbanSchiffmann_HallAlgebraEllipticCurve} is also incorrect.

Fortunately, Cramer's proof can be repaired in such a way that the overall strategy of his work \cite{Cramer_DoubleHallAlgebras} is preserved. Since Cramer's result plays a fundamental role in the theory of Hall algebras and the corresponding isomorphism \eqref{eq:CramerIsomorphism} has been used crucially in several subsequent works (see e.g.~\cite{BSDrinfeldBeck, BSWeightLine}), we believe that it is appropriate to provide a detailed corrected proof.  
\\[+4pt]
\emph{Acknowledgement.} The first-named author is thankful to Francesco Sala and Federico Volpe for drawing his attention to 
a problematic place in the proof of  \cite[Proposition 2.4.3]{Kapranov_HeisenbergDouble}.
This work was funded by the German Research Foundation SFB-TRR 358/1 2023 – 491392403.

\section{Hall algebras and Drinfeld doubles}
We consider hereditary abelian categories $\kA$ over a finite field $\kk=\FF_q$ such that $\Hom_\kA(X,Y)$ and $\Ext_\kA^1(X,Y)$ are finite-dimensional for all $X,Y\in \kA$. Given objects $X,Y,Z\in\kA$, we write
\begin{align*}
P_{X,Y}^Z&=\left|\left\{(f,g)\in\Hom_\kA(Y,Z)\times \Hom_\kA(Z,X)~\bigg\vert~
0\rightarrow Y\stackrel{f}{\rightarrow} Z \stackrel{g}{\rightarrow} X \rightarrow 0 \text{ is exact}\right\}\right|,\\
a_X &= \big|\Aut_{\kA}(X)\big|
\end{align*}
and
\begin{align*}
\langle X, Y\rangle &= v^{\dim_\kk\Hom_\kA(X,Y)-\dim_\kk\Ext_\kA^1(X,Y)}, \\
(X,Y) &= \langle X, Y\rangle\langle Y, X\rangle
\end{align*}
with $v=\sqrt{q}$. Note that $\langle X,Y\rangle$ only depends on the corresponding classes $\overline{X}, \overline{Y}\in\sK_0(\kA)$. Hence we can view $\langle -,-\rangle$ and $(-,-)$ as functions on $\sK_0(\kA)\times \sK_0(\kA)$.

\begin{definition}[\cite{Ringel_HallAlgebrasAndQuantumGroups}]
The \emph{extended twisted Hall algebra} $\sH_\kA$ of the abelian category $\kA$ is an associative $\widetilde\QQ$-algebra with underlying vector space
\begin{align*}
\sH_\kA=\overline{\sH}_\kA\otimes_{\widetilde\QQ}\widetilde\QQ\left[\sK_0(\kA)\right]
\quad\text{ with }\quad
\overline{\sH}_\kA=\bigoplus_{[Z]\in\Ob(\kA)_{/\cong}}\widetilde\QQ[Z]
\end{align*}
where $\widetilde\QQ\left[\sK_0(\kA)\right]$ denotes the group algebra of the Grothendieck group $\sK_0(\kA)$. We use the notation $[X]k_\alpha=[X]\otimes k_\alpha$ for elements in $\sH_\kA$. The multiplication is given by
\begin{align*}
[X][Y] &= \langle X,Y \rangle \sum_{[Z]} \frac{P_{X,Y}^Z}{a_Xa_Y} [Z]
\qquad\text{ and }\qquad
k_\alpha[X] = (\alpha, X)[X]k_\alpha
\end{align*}
for all $X,Y\in\kA$ and $\alpha\in\sK_0(\kA)$ where the sum runs over all isomorphism classes of objects in $\kA$.
It has been shown in \cite{Ringel_HallAlgebrasAndQuantumGroups} that this turns $\sH_\kA$ into an associative algebra with unit $1=[0]\otimes k_0$.
\end{definition}

From now on we restrict ourselves to the following two cases:
\begin{enumerate}
\item[(1)] $\kA$ is a length category, or
\item[(2)] $\kA=\mathit{Coh}(\XX)$ the category of coherent sheaves on a  complete non-commutative hereditary curve $\XX$ over $\kk$; see e.g.~\cite{BurbanDrozd_NonCommutativeSchemes} for the corresponding definitions. 
\end{enumerate}

Following \cite{Green_HallAlgebras}, the algebra $\sH_\kA$ can be equipped with a comultiplication $\Delta$ and a bialgebra pairing $\varphi:\sH_\kA\times\sH_\kA\longrightarrow \widetilde\QQ$, defined by
\begin{align*}
\Delta\bigl([Z]k_\alpha\bigr) &= \sum_{[X],[Y]} \langle X,Y\rangle \frac{P_{X,Y}^Z}{a_Z} [X]k_{Y+\alpha}\otimes [Y]k_\alpha, \\
\varphi\bigl([X]k_\alpha,[Y]k_\beta\bigr) &= (\alpha, \beta) \frac{\delta_{X,Y}}{a_X}.
\end{align*}
In the case of a length category this turns $\sH_\kA$ into a Hopf algebra \cite{Xiao}. Otherwise the coproduct might be an infinite sum. In this case, one can introduce the completed tensor product and obtains a so-called \emph{topological bialgebra}, see \cite[Appendix B]{BurbanSchiffmann_HallAlgebraEllipticCurve} for further details.

\begin{definition}
Let $\sH_\kA^+$ and $\sH_\kA^-$ be two copies of the algebra $\sH_\kA$, whose elements we temporarily denote by $[X]^\pm k_\alpha^\pm$. The \emph{Drinfeld double} $\widetilde\sD\sH_\kA$ of $\sH_\kA$ with respect to the pairing $\varphi$ is defined as the free product of algebras $\sH_\kA^+$ and $\sH_\kA^-$ modulo the relation
\begin{align}\label{eq:DrinfeldDoubleRelationGeneral}
\tag{R(a,b)}
\sum_{i,j}\varphi\left(a^{(2)}_i,b^{(1)}_j\right)a^{(1)+}_i b^{(2)-}_j
&= \sum_{i,j}\varphi\left(a^{(1)}_i,b^{(2)}_j\right) b^{(1)-}_j a^{(2)+}_i
\end{align}
for all $a,b\in\sH_\kA$ where we use the notation
\begin{align*}
\Delta(a) = \sum_{i}a^{(1)}_i\otimes a^{(2)}_i
\qquad\text{ and }\qquad
\Delta(b) = \sum_{j}b^{(1)}_j\otimes b^{(2)}_j.
\end{align*}
\end{definition}

The following result is well-known in the case where $\kA$ is a length category and hence $\sH_\kA$ is a Hopf algebra \cite[Section 3.2]{Joseph_QuantumGroups};  otherwise we refer to \cite[Appendix B]{BurbanSchiffmann_HallAlgebraEllipticCurve}.
\begin{theorem}\label{thm:DrinfeldDoubleIsoTensorproduct}
The multiplication map $\sH_\kA^+\otimes\sH_\kA^-\longrightarrow\widetilde\sD\sH_\kA$ is an isomorphism of $\widetilde\QQ$-vector spaces.
\end{theorem}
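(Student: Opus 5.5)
The plan is to show surjectivity and injectivity separately. The tool for both is to read the relation \eqref{eq:DrinfeldDoubleRelationGeneral} as a \emph{straightening rule}: it lets us rewrite a product $b^-a^+$ as a combination of products $a'^+b'^-$.

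\textbf{Surjectivity.} Fix basis elements $a=[X]k_\alpha$ and $b=[Y]k_\beta$. In $\Delta(a)$, the term $[X]k_\alpha\otimes k_\alpha$ is the unique one whose left tensor factor has top degree $\overline X$. Likewise, in $\Delta(b)$ the term $k_{Y+\beta}\otimes[Y]k_\beta$ is the unique one whose right factor is $[Y]k_\beta$.

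Now look at $R(a,b)$. On its right-hand side, the term with $a^{(1)}=[X]k_\alpha$ and $b^{(2)}=[Y]k_\beta$ is
\[
\varphi(k_{X+\alpha},k_{Y+\beta})\,k_{Y+\beta}^-\,[X]^+k^+_\alpha\cdot(\text{unit}).
\]
The scalar here is a power of $v$, hence nonzero. All other terms on that side involve pairs $(b^{(1)-}, a^{(2)+})$ with $a^{(2)}$ of strictly smaller degree $\overline{X'}<\overline X$, or $b^{(1)}$ of smaller degree, in the positive cone of $\sK_0$. One also has to identify $k^+_\gamma$ with $k^-_{\gamma}$ (up to inverse), which follows from $R(k_\alpha,k_\beta)$. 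Granting that, induction on $\overline X+\overline Y$ in the partial order given by effective classes expresses every $[Y]^-[X]^+$ in the span of the image of $\sH^+\otimes\sH^-$.

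Since the free product is spanned by alternating words, repeated straightening then shows that the image of multiplication is the whole algebra. In case (2) the coproduct is infinite, but the pairing $\varphi(a^{(2)},b^{(1)})$ vanishes unless $a^{(2)}\cong b^{(1)}$. So only finitely many terms survive, because there are finitely many subobjects and quotients up to isomorphism with fixed class bounded by $X$, $Y$. This keeps the relations finite sums.

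\textbf{Injectivity.} This is the main obstacle. I would construct an associative multiplication directly on $V=\sH^+\otimes\sH^-$ and check that it satisfies $R(a,b)$. Concretely, define
\[
(a\otimes b)(a'\otimes b')=\sum \varphi\bigl(S(b_{(1)}),a'_{(1)}\bigr)\,\varphi\bigl(b_{(3)},a'_{(3)}\bigr)\; a a'_{(2)}\otimes b_{(2)}b'.
\]
This is the standard Drinfeld double formula with antipode $S$.

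In the Hopf case (1) the antipode exists by \cite{Xiao}, and associativity follows from coassociativity and the bialgebra property of $\varphi$. This is \cite[Section 3.2]{Joseph_QuantumGroups}. The multiplication makes $V$ an algebra receiving a map from the free product that factors through the relations. Composing with the multiplication map gives the identity on $V$, which yields injectivity.

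In case (2) there is no antipode. Here I would instead use the grading by $\sK_0$ and the fact that $\varphi$ is nondegenerate and homogeneous. This allows the inverse pairing terms to be solved degree by degree, triangularly, which replaces $S$ by the recursive straightening coefficients from the surjectivity step. Finite-support arguments as above guarantee convergence. Associativity would then be verified via the Heisenberg-double/twisted realization of $\widetilde\sD\sH_\kA$ acting faithfully on $\sH_\kA$, as in \cite[Appendix B]{BurbanSchiffmann_HallAlgebraEllipticCurve}. This faithful-representation route, checking that $V$ acts and the action separates $V$, is where I expect most of the work to lie.
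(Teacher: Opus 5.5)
There is a genuine gap. There is also an error that contradicts the very statement you are proving. For orientation: the paper does not prove this theorem itself. It cites \cite{Joseph_QuantumGroups} for the Hopf case and \cite[Appendix B]{BurbanSchiffmann_HallAlgebraEllipticCurve} otherwise, and your case (1) is essentially Joseph's construction.

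\textbf{Surjectivity: a false relation and a misplaced leading term.} $R(k_\alpha,k_\beta)$ does not identify $k^+_\gamma$ with $(k^-_\gamma)^{\pm1}$. Since $\Delta(k_\alpha)=k_\alpha\otimes k_\alpha$, it reads $(\alpha,\beta)\,k^+_\alpha k^-_\beta=(\alpha,\beta)\,k^-_\beta k^+_\alpha$, i.e.\ the two tori merely commute. The identification $k^+_\gamma=k^-_{-\gamma}$ is exactly what is imposed to pass to the \emph{reduced} double $\sD\sH_\kA$. In $\widetilde\sD\sH_\kA$ it must fail for $\gamma\neq0$, since otherwise the multiplication map would kill the nonzero vector $k_\gamma\otimes 1-1\otimes k_{-\gamma}$. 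It is also not needed: $R([Z],k_\beta)$ and $R(k_\alpha,[Z])$ give $[Z]^+k^-_\beta=(Z,\beta)\,k^-_\beta[Z]^+$ and $[Z]^-k^+_\alpha=(\alpha,Z)\,k^+_\alpha[Z]^-$, which is all the straightening requires.

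Your leading term is the wrong one. The term on the right of $R([X]k_\alpha,[Y]k_\beta)$ containing $[Y]^-[X]^+$ comes from $a^{(1)}=k_{X+\alpha}$, $b^{(2)}=k_\beta$. It equals $(X+\alpha,\beta)\,([Y]k_\beta)^-([X]k_\alpha)^+$. Your choice $a^{(1)}=[X]k_\alpha$, $b^{(2)}=[Y]k_\beta$ instead has coefficient $\varphi([X]k_\alpha,[Y]k_\beta)$, which is zero unless $X\cong Y$, and product $k^-_{Y+\beta}k^+_\alpha$. The remaining terms are indexed by nonzero $M$ that are simultaneously quotients of $X$ and subobjects of $Y$, and they involve $[Y/M]^-$.

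In case (2) you cannot induct along the effective order on $\sK_0$, because it is not well founded. On $\mathbb{P}^1$, $\overline{\mathcal{O}(n)}-\overline{\mathcal{O}(n-1)}$ is effective for every $n$. Termination comes instead from noetherianity: iterated straightening replaces $Y$ by its quotients modulo a strictly increasing chain of subobjects.

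\textbf{Injectivity in case (2): the missing step.} This is the actual content of the cited appendix, and your proposal does not supply it. No multiplication is constructed, and the ``faithful action of a twisted Heisenberg double'' is not specified. The premise that $\varphi$ is nondegenerate is false in general. For an elliptic curve the symmetrised Euler form vanishes, so $\varphi(k_\alpha,k_\beta)=1$ for all $\alpha,\beta$, and $k_\alpha-1$ lies in the radical of $\varphi$.

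What replaces the antipode is not non-degeneracy but the convolution inverse $\bar\varphi$ of $\varphi$. It is characterised by $\sum\bar\varphi(a_{(1)},b_{(2)})\varphi(a_{(2)},b_{(1)})=\epsilon(a)\epsilon(b)$, and it exists by triangularity because $\varphi(k_\alpha,k_\beta)=(\alpha,\beta)$ is a unit. In the Hopf case $\bar\varphi(a,b)=\varphi(a,S^{-1}(b))$. One then defines
\[
(1\otimes b)(a\otimes 1)=\sum\bar\varphi\bigl(a_{(1)},b_{(3)}\bigr)\,\varphi\bigl(a_{(3)},b_{(1)}\bigr)\,a_{(2)}\otimes b_{(2)}.
\]
Next one checks that only finitely many terms are nonzero. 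Associativity then follows from coassociativity and the pairing axioms.

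This formula also shows that your displayed formula does not match the paper's conventions, even in the Hopf case. For $b=[Y]$ and $a'=k_\alpha$ it gives $[Y]^-k^+_\alpha=(\alpha,Y)^{-1}k^+_\alpha[Y]^-$, whereas $R(k_\alpha,[Y])$ demands the factor $(\alpha,Y)$.
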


We can therefore identify $\widetilde\sD\sH_\kA$ with the tensor product of two copies of $\sH_\kA$ and write from now on
\begin{align*}
[X]^+k_\alpha^+=[X]k_\alpha\otimes 1
\qquad \text{ and }\qquad
[X]^-k_\alpha^-=1\otimes [X]k_\alpha
\end{align*}
for all $X\in\kA$ and $\alpha\in\sK_0(\kA)$.

\begin{definition}
The \emph{reduced Drinfeld double} $\sD\sH_\kA$ is the quotient of $\widetilde\sD\sH_\kA$ by the two-sided ideal generated by all elements $(k_\alpha\otimes 1)-(1\otimes k_{-\alpha})$ for $\alpha\in\sK_0(\kA)$.
\end{definition}

The goal for the remainder of this section is to fully express the algebra $\sD\sH_\kA$ in terms of concrete relations. In order to rewrite these relations in a form that is best suited to our purposes, we introduce the following notation.

\begin{definition}
For objects $L,X,Y,N\in \kA$ we write
\begin{align*}
c_{X,Y}^{L[1]\oplus N} 
&= \big|\bigl\{\psi\in \Hom_{\kD^b(\kA)}(X,Y)~\vert~\cone(\psi)\cong L[1]\oplus N\bigr\}\big|.
\end{align*}
\end{definition}

We proceed by proving two properties of these numbers which relate them to classical Hall numbers.

\begin{lemma}\label{lemma:cEqualP}
Let $X,Y,Z$ be objects in $\kA$ such that $\Hom_\kA(X,Y)=0$. Then
\begin{align*}
c_{X,Y[1]}^{Z[1]} = \frac{P_{X,Y}^Z}{a_Z}.
\end{align*}
\end{lemma}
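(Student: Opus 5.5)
Here $\Hom_{\kD^b(\kA)}(X,Y[1])=\Ext^1_\kA(X,Y)$, so $c_{X,Y[1]}^{Z[1]}$ counts extension classes $\xi\in\Ext^1_\kA(X,Y)$ with $\cone(\xi)\cong Z[1]$. The plan is to compare both sides with the set of classes $\xi$ whose middle term is isomorphic to $Z$. To $\xi$ we attach a short exact sequence $0\to Y\to E_\xi\to X\to 0$, which gives a distinguished triangle $Y\to E_\xi\to X\stackrel{\xi}\to Y[1]$. Rotating it shows $\cone(\xi)\cong E_\xi[1]$. Since the shift functor is an equivalence, $\cone(\xi)\cong Z[1]$ holds iff $E_\xi\cong Z$. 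Hence $c_{X,Y[1]}^{Z[1]}=\bigl|\{\xi\in\Ext^1_\kA(X,Y)\mid E_\xi\cong Z\}\bigr|$, and this part does not use the hypothesis $\Hom_\kA(X,Y)=0$.

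Next we relate this set to $P_{X,Y}^Z$, using the classical Riedtmann-type count. Consider the map $\Phi$ from the set of pairs $(f,g)$ counted by $P_{X,Y}^Z$ to $\Ext^1_\kA(X,Y)$ sending $(f,g)$ to the class of $0\to Y\stackrel{f}\to Z\stackrel{g}\to X\to 0$. Its image is exactly $\{\xi\mid E_\xi\cong Z\}$. Two pairs $(f,g)$ and $(f',g')$ have the same image iff there is $\theta\in\Aut_\kA(Z)$ with $f'=\theta f$ and $g=g'\theta$, since an equivalence of extensions with identities on $Y$ and $X$ is such an automorphism (five lemma). So $\Aut_\kA(Z)$ acts on the pairs by $\theta\cdot(f,g)=(\theta f,g\theta^{-1})$, and the fibres of $\Phi$ are precisely its orbits.

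The key step, and the only place where $\Hom_\kA(X,Y)=0$ enters, is to show this action is free. Suppose $\theta f=f$ and $g\theta=g$. Then $(\theta-\id)f=0$, so $\theta-\id$ factors through $\cok f\cong X$ as $h\circ g$ with $h\colon X\to Z$. From $g(\theta-\id)=0$ we get $g h g=0$, and since $g$ is epi, $gh=0$. Hence $h$ factors through $\ker g=Y$, i.e.\ $h=f\circ u$ with $u\in\Hom_\kA(X,Y)=0$. Thus $\theta=\id$. In general the stabilizer is $\id+f\Hom(X,Y)g$, so one would get a factor $|\Hom_\kA(X,Y)|$; the hypothesis kills it.

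With a free action, every fibre of $\Phi$ has exactly $a_Z$ elements, so $P_{X,Y}^Z=a_Z\cdot\bigl|\{\xi\mid E_\xi\cong Z\}\bigr|=a_Z\,c_{X,Y[1]}^{Z[1]}$, which is the claim. The main obstacle is the careful identification of the fibres with orbits together with the freeness computation. The cone identification in the first step is standard.
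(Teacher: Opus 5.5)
Your proof is correct and takes essentially the same approach as the paper. You identify $c_{X,Y[1]}^{Z[1]}$ with extension classes whose middle term is $Z$, observe that $\Aut_\kA(Z)$ acts transitively on each class, and prove freeness by factoring $\theta-\id$ through $g$ and then through $f$, so that $\Hom_\kA(X,Y)=0$ forces $\theta=\id$. You also spell out the cone identification via rotation and the general stabilizer $\id+f\,\Hom_\kA(X,Y)\,g$, both of which the paper leaves implicit.
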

\begin{proof}
Since $\Hom_{\kD^b(\kA)}(X,Y[1])\cong\Ext^1_\kA(X,Y)$, the number $c_{X,Y[1]}^{Z[1]}$ counts equivalence classes of short exact sequences $0\longrightarrow Y\longrightarrow Z\longrightarrow X\longrightarrow 0$. Recall that two short exact sequences $(f,g)$ and $(f',g')$ are equivalent if and only if there exists a commutative diagram
\begin{align*}
\xymatrix{
0 \ar[r] & Y \ar[r]^f \ar@{=}[d] & Z \ar[r]^g \ar[d] & X \ar[r] \ar@{=}[d] & 0\\
0 \ar[r] & Y \ar[r]^{f'}        & Z \ar[r]^{g'}        & X \ar[r]        & 0.
}
\end{align*}
The claim of the lemma will follow once we know that every equivalence class has precisely $a_Z$ elements. It is clear that the automorphism group $\Aut_\kA(Z)$ acts transitively on every equivalence class. It remains to show that this action is free, provided $\Hom_\kA(X,Y)=0$. If $\sigma\in\Aut_\kA(Z)$ is an automorphism that stabilises a short exact sequence $(f,g)$, then we obtain commutative diagrams
\begin{align*}
\xymatrix{
0 \ar[r] & Y \ar[r]^f \ar[d]_\id & Z \ar[r]^g \ar[d]^{\sigma} & X \ar[r] \ar[d]^\id  & 0\\
0 \ar[r] & Y \ar[r]_f        & Z \ar[r]_g        & X \ar[r]        & 0
}
\hspace{2cm}
\xymatrix{
0 \ar[r] & Y \ar[r]^f \ar[d]_0 & Z \ar[r]^g \ar[d]_{\tilde{\sigma}} & X \ar[r] \ar[d]^0 \ar@{.>}[dl]|{\exists\psi} & 0\\
0 \ar[r] & Y \ar[r]_f        & Z \ar[r]_g        & X \ar[r]        & 0
}
\end{align*}
with $\tilde{\sigma}=\id-\sigma$. By the universal property of the cokernel, there exists a morphism $\psi:X\longrightarrow Z$ such that $\psi g=\tilde{\sigma}$. In particular, we get
\begin{align*}
g \psi g = g\tilde{\sigma} = 0 \qquad\Rightarrow\qquad g\psi=0
\end{align*}
because $g$ is an epimorphism. By the universal property of the kernel, the morphism $\psi$ factors through $Y$. From the assumption $\Hom_\kA(X,Y)=0$ we can deduce that $\psi=0$ and hence $\tilde{\sigma}=0$ and $\sigma=\id$. This shows that the $\Aut_\kA(Z)$-action on every equivalence class of short exact sequences $0\longrightarrow Y\longrightarrow Z\longrightarrow X\longrightarrow 0$ is free. Therefore, every equivalence class has precisely $a_Z$ elements which finishes the proof.
\end{proof}

\begin{lemma}\label{lemma:cEqualSum}
Let $L,X,Y,N$ be objects in $\kA$. Then
\begin{align*}
c_{X,Y}^{L[1]\oplus N} = \frac{1}{a_La_N}\sum_{[M]}\frac{P_{N,M}^YP_{M,L}^X}{a_M}.
\end{align*}
\end{lemma}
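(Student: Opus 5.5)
Since $X,Y\in\kA$, a morphism $\psi\in\Hom_{\kD^b(\kA)}(X,Y)$ is just a morphism $\psi\in\Hom_\kA(X,Y)$. In the hereditary setting its cone is determined by its kernel and cokernel: $\cone(\psi)\cong \ker(\psi)[1]\oplus\cok(\psi)$. Indeed, the cone has cohomology $\ker\psi$ in degree $-1$ and $\cok\psi$ in degree $0$, and every object of $\kD^b(\kA)$ splits into the sum of its shifted cohomologies. By Krull--Schmidt (or by comparing cohomology), $\cone(\psi)\cong L[1]\oplus N$ holds if and only if $\ker\psi\cong L$ and $\cok\psi\cong N$. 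So
$$c_{X,Y}^{L[1]\oplus N}=\bigl|\{\psi\in\Hom_\kA(X,Y)\mid \ker\psi\cong L,\ \cok\psi\cong N\}\bigr|.$$

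We count these morphisms by factoring through the image. Split the set according to the isomorphism class $[M]$ of $\operatorname{im}\psi$. Consider the set of quadruples $(f_1,g_1,f_2,g_2)$ such that $0\to L\xrightarrow{f_1}X\xrightarrow{g_1}M\to0$ and $0\to M\xrightarrow{f_2}Y\xrightarrow{g_2}N\to0$ are exact. By definition of $P$ there are exactly $P_{M,L}^X\,P_{N,M}^Y$ such quadruples. The map sending a quadruple to $\psi=f_2g_1$ (composition written in the paper's order) lands in our set with $\operatorname{im}\psi\cong M$.

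The main step is to show that every fibre of this map is a torsor under $\Aut_\kA(L)\times\Aut_\kA(M)\times\Aut_\kA(N)$. The action is $(\lambda,\mu,\nu)\cdot(f_1,g_1,f_2,g_2)=(\lambda^{-1}f_1,\,g_1\mu,\,\mu^{-1}f_2,\,g_2\nu)$, adjusted to the composition convention.
\begin{itemize}[leftmargin=*]
\item \emph{Surjectivity onto $\psi$.} Given $\psi$ with $\operatorname{im}\psi\cong M$, choose an isomorphism of $M$ with the image. Together with the kernel inclusion and the cokernel projection this gives a quadruple in the fibre.
\item \emph{Transitivity.} Two epi--mono factorisations $\psi=f_2g_1=f_2'g_1'$ differ by a unique $\mu\in\Aut(M)$, by uniqueness of the image factorisation. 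Then $f_1,f_1'$ are both kernels of $g_1$ (equivalently of $\psi$), so they differ by a unique $\lambda\in\Aut(L)$. Dually, $g_2,g_2'$ are both cokernels of $f_2$, so they differ by a unique $\nu\in\Aut(N)$.
\item \emph{Freeness.} The uniqueness statements just used also show that the stabiliser is trivial, since mono/epi properties force $\lambda=\id$, $\mu=\id$, $\nu=\id$.
\end{itemize}
Hence each fibre has exactly $a_La_Ma_N$ elements.

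Summing over $[M]$ gives
$$c_{X,Y}^{L[1]\oplus N}=\sum_{[M]}\frac{P_{M,L}^XP_{N,M}^Y}{a_La_Ma_N},$$
which is the claimed formula. The only delicate point is the cone identification in the first paragraph. The cleanest route is the triangle $X\to Y\to\cone\psi\to X[1]$ together with the long exact cohomology sequence, which gives $H^{-1}=\ker\psi$ and $H^0=\cok\psi$. Heredity then yields the splitting $\cone(\psi)\cong H^{-1}[1]\oplus H^0$. Note that this argument does not need the hypothesis $\Hom(X,Y)=0$, unlike Lemma~\ref{lemma:cEqualP}.
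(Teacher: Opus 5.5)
Your proof is correct and follows essentially the same route as the paper. You use heredity to identify $\cone(\psi)\cong\ker(\psi)[1]\oplus\cok(\psi)$, then count pairs of short exact sequences $0\to L\to X\to M\to 0$, $0\to M\to Y\to N\to 0$ by sorting the morphisms $\psi$ according to the isomorphism class of their image. The only difference is bookkeeping: the paper passes through the intermediate set of four-term exact sequences $0\to L\to X\stackrel{\psi}{\to} Y\to N\to 0$ (fibres of size $a_M$, then $a_La_N$), whereas you do it in one step via a free transitive action of $\Aut_\kA(L)\times\Aut_\kA(M)\times\Aut_\kA(N)$ on each fibre.
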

\begin{proof}
Since $\kA$ is hereditary, we have
\begin{align*}
\cone(\psi)\cong L[1]\oplus N 
\qquad\Longleftrightarrow\qquad
\ker(\psi)\cong L\text{ and }\cok(\psi)\cong N
\end{align*}
for every morphism $\psi\in\Hom_\kA(X,Y)$. For every object $M\in\kA$, the map
\begin{align*}
\left\{ 
\begin{array}{c}
\text{pairs of short exact sequences}\\[+3pt]
0\longrightarrow L\longrightarrow X \longrightarrow M\longrightarrow 0\\
0\longrightarrow M\longrightarrow Y \longrightarrow N\longrightarrow 0
\end{array}\right\}
&\longrightarrow
\left\{ 
\begin{array}{c}
\text{exact sequences}\\
0\longrightarrow L\longrightarrow X \stackrel{\psi}{\longrightarrow} Y\longrightarrow N\longrightarrow 0\\
\text{with } \mathsf{Im}(\psi)\cong M
\end{array}\right\}
\end{align*}
is surjective and the fibre over every element of the target has precisely $a_M$ elements. Taking cardinalities and summing over all isomorphism classes of objects $M\in\kA$, we obtain
\begin{align*}
\sum_{[M]}\frac{P_{N,M}^YP_{M,L}^X}{a_M}
&=\left|
\left\{ 
\begin{array}{c}
\text{exact sequences}\\
0\longrightarrow L\longrightarrow X \stackrel{\psi}{\longrightarrow} Y\longrightarrow N\longrightarrow 0
\end{array}\right\}
\right|
\\
&=a_La_N\cdot
\left|
\left\{
X \stackrel{\psi}{\longrightarrow} Y
\right|\left.
\begin{array}{c}
\ker(\psi)\cong L, \\
\cok(\psi)\cong N
\end{array}\right\}
\right|
=a_La_N\cdot c_{X,Y}^{L[1]\oplus N}.
\qedhere
\end{align*}
\end{proof}

Now we are ready to give a complete list of relations for the reduced Drinfeld double of $\kA$.

\begin{proposition}\label{P:RelationsDouble}
The reduced Drinfeld double $\sD\sH_\kA$ is isomorphic to the free associative $\widetilde\QQ$-algebra on the vector space $\sH_\kA\otimes\sH_\kA$ modulo the relations
\begin{align}
\label{eq:relationLeftRight}
([X] \otimes 1)(1 \otimes [Y]) &= ([X] \otimes [Y])  \\[+8pt]
\label{eq:relationK0right}
(1 \otimes k_X)(1 \otimes k_Y) &= (1 \otimes k_{X+Y})  \\
\label{eq:relationK0left}
(k_X \otimes 1)(k_Y \otimes 1) &= (k_{X+Y} \otimes 1)  \\
\label{eq:relationK0reduced}
k_X \otimes 1 &= 1 \otimes k_{-X} \\[+8pt]
\label{eq:relationK0swapRight}
(1 \otimes k_X)(1 \otimes [Y]) = (1 \otimes k_X[Y]) &= (X,Y)(1\otimes [Y]k_X) = (X, Y) (1 \otimes [Y])(1 \otimes k_X)  \\
\label{eq:relationK0swapLeft}
(k_X \otimes 1)([Y] \otimes 1) = (k_X [Y] \otimes 1) &= (X,Y)([Y]k_X\otimes 1) =  (X, Y) ([Y] \otimes 1)(k_X \otimes 1)  \\[+8pt]
\label{eq:relationHallProductRight}
(1 \otimes [X])(1 \otimes [Y]) &= \langle X, Y \rangle \sum_{[Z]} \frac{P_{X,Y}^Z}{a_Xa_Y} (1 \otimes [Z]) \\
\label{eq:relationHallProductLeft}
([X] \otimes 1)([Y] \otimes 1) &= \langle X, Y \rangle \sum_{[Z]} \frac{P_{X,Y}^Z}{a_Xa_Y} ([Z] \otimes 1)  
\end{align}
\vspace{-8pt}
\begin{align}\label{eq:relationDrinfeld}
\nonumber
&\frac{\langle X, X \rangle}{\langle Y, X \rangle}\sum_{[L], [N]} \frac{\langle N, L \rangle}{\langle L, L \rangle} \frac{a_L a_N}{a_Xa_Y} c_{Y, X}^{N[1] \oplus L}  ([L] \otimes [N]k_{L-X}) \nonumber \\
&\hspace{2.5cm} =\frac{\langle Y, Y \rangle}{\langle X, Y \rangle}\sum_{[L], [N]} \frac{\langle L, N \rangle}{\langle N, N \rangle} \frac{a_L a_N}{a_Xa_Y} c_{X, Y}^{L[1] \oplus N} (1 \otimes [N])([L] \otimes k_{X-L}). 
\end{align}
for all $X,Y \in \kA$.
\end{proposition}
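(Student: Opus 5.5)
The plan is to start from the definition of $\sD\sH_\kA$ as the free product of $\sH_\kA^+$ and $\sH_\kA^-$ modulo the relations R$(a,b)$ and the reduction ideal, and to show that the listed relations generate exactly the same ideal. Relations \eqref{eq:relationK0right}--\eqref{eq:relationHallProductLeft} are the multiplication rules of the two copies of $\sH_\kA$ together with the reduction $k_\alpha\otimes 1=1\otimes k_{-\alpha}$. Relation \eqref{eq:relationLeftRight} encodes Theorem \ref{thm:DrinfeldDoubleIsoTensorproduct}: the product $a^+b^-$ is identified with $a\otimes b$. What remains is to show that the family R$(a,b)$ is equivalent, modulo these, to \eqref{eq:relationDrinfeld}.

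First, reduce R$(a,b)$ to $a=[X]$, $b=[Y]$. Since $\Delta$ is an algebra morphism and $\varphi$ is a bialgebra pairing, R$(a,b)$ for products follows from R on the factors. For $a$ or $b$ equal to $k_\alpha$, the relation is a consequence of the $K_0$ commutation rules, using $\Delta(k_\alpha)=k_\alpha\otimes k_\alpha$ and $\varphi(k_\alpha,k_\beta)=(\alpha,\beta)$. Every element $[X]k_\alpha$ is a product of such generators, so it suffices to treat $a=[X]$, $b=[Y]$.

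Next, expand R$([X],[Y])$ using
\begin{align*}
\Delta([X])=\sum\langle A,B\rangle \frac{P^X_{A,B}}{a_X}[A]k_B\otimes [B]
\end{align*}
and similarly for $[Y]$. The pairing $\varphi(a^{(2)},b^{(1)})$ forces the right tensor factor $[B]$ of $\Delta[X]$ to coincide with the left factor of $\Delta[Y]$, and it carries a factor $1/a_B$. The left side of R thus becomes a sum over $L,M,N$ of
\[
\frac{P^X_{L,M}P^Y_{M,N}}{a_Xa_Ya_M}\,[L]k_M\otimes[N]\cdots
\]
with Euler-form prefactors, and the right side is the symmetric expression with the roles of $X$ and $Y$ swapped. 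Summing over $[M]$ and applying Lemma \ref{lemma:cEqualSum} turns $\sum_M P^Y_{N,M}P^X_{M,L}/a_M$ into $a_La_N\, c^{L[1]\oplus N}_{X,Y}$. On the other side the same step gives $c^{N[1]\oplus L}_{Y,X}$, matching the indexing in \eqref{eq:relationDrinfeld}.

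The remaining work is bookkeeping of scalars. The $K_0$ factors $k_M$ must be rewritten via $\overline M=\overline X-\overline L=\overline Y-\overline N$ in $\sK_0$ and moved to the right using \eqref{eq:relationK0swapRight} and \eqref{eq:relationK0reduced}. The accumulated Euler forms $\langle L,M\rangle\langle M,N\rangle(\cdot,\cdot)$ then have to be expressed, by bilinearity, as $\frac{\langle X,X\rangle}{\langle Y,X\rangle}\frac{\langle N,L\rangle}{\langle L,L\rangle}$ on one side and the symmetric quantity on the other. Substituting $M=X-L=Y-N$ and expanding is routine but error-prone. I expect this exponent bookkeeping, together with checking that the ordering $(1\otimes[N])([L]\otimes k_{X-L})$ on the right-hand side arises correctly (it is a $b^-a^+$ product rather than a normal-ordered one), to be the main obstacle. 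The conversion of products of Hall numbers into the $c$'s is immediate from Lemma \ref{lemma:cEqualSum}.

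Finally, one must check that no relations are lost. The map from the free algebra to $\sD\sH_\kA$ is well defined once all relations hold there, which the computation above shows. It is injective modulo the relations because the relations allow every word to be rewritten into the normal form $a\otimes b$, matching the basis given by Theorem \ref{thm:DrinfeldDoubleIsoTensorproduct}.
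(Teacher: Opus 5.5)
Your proposal is correct, and its central computation is the paper's: expand $\mathrm{R}([X],[Y])$, relabel the surviving terms by $L,M,N$, replace $k_M$ by $k_{X-L}$ resp.\ $k_{Y-N}$ using the reduction and swap relations, collect Euler forms, and apply Lemma \ref{lemma:cEqualSum}. Note that the $K_0$ substitution must come before the summation over $[M]$, so that the summand depends on $M$ only through the Hall numbers. The two surrounding steps are organised differently from the paper, and in both cases the paper's route is shorter.

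\emph{Reduction to $\alpha=\beta=0$.} The paper treats $a=[X]k_\alpha$, $b=[Y]k_\beta$ directly. It observes that, modulo the $K_0$ relations, the factors $(\alpha,\beta)$ and $1\otimes k_{\beta-\alpha}$ split off, so bilinearity of $\mathrm{R}$ finishes the reduction without any multiplicativity argument. Your reduction works only in the precise form of Lemma \ref{lemma:DrinfeldRelationMultiplicative}: $\mathrm{R}(a,b_0b_1)$ follows from $\mathrm{R}(a^{(0)}_i,b_0)$ and $\mathrm{R}(a^{(1)}_i,b_1)$, not from relations involving $a$ itself. The first legs $[A]k_B$ of $\Delta([X])$ carry $K_0$-factors. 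Hence splitting $[Y]k_\beta=[Y]\cdot k_\beta$ leads back to relations $\mathrm{R}([A],[C]k_D)$ of the same shape and forces an induction. Writing $[Y]k_\beta$ as a scalar multiple of $k_\beta[Y]$ avoids this, since then only $\mathrm{R}([A]k_B,k_\beta)$ and $\mathrm{R}([B],[Y])$ are needed. The former reduces at once to generators because $\Delta(k_\beta)=k_\beta\otimes k_\beta$.

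\emph{Completeness.} You do not need normal forms here. Each step of your computation is an equivalence modulo the other listed relations, so those relations imply $\mathrm{R}(a,b)$ for all $a,b$. Hence $a\mapsto a\otimes 1$ and $b\mapsto 1\otimes b$ induce an algebra map from $\sD\sH_\kA$ to the presented algebra, inverse to the obvious one. Your spanning-plus-independence argument can be made to work, but as written it omits two ingredients.
\begin{enumerate}
\item A triangular induction showing that \eqref{eq:relationDrinfeld} lets you rewrite $(1\otimes[Y])([X]\otimes 1)$. The term $(L,N)=(X,Y)$, coming from $\psi=0$, has coefficient $1$, but termination in the sheaf case needs an argument.
\item Linear independence of the monomials $[X]\otimes[Y]k_\alpha$ in the \emph{reduced} double. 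Theorem \ref{thm:DrinfeldDoubleIsoTensorproduct} gives a basis only for $\widetilde{\sD}\sH_\kA$.
\end{enumerate}
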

\begin{proof}
The relation \eqref{eq:relationLeftRight} is a consequence of our identification of $\widetilde\sD\sH_\kA$ with $\sH_\kA^+\otimes\sH_\kA^-$ according to Theorem \ref{thm:DrinfeldDoubleIsoTensorproduct}. The relations \eqref{eq:relationK0right}, \eqref{eq:relationK0left}, \eqref{eq:relationHallProductRight} and \eqref{eq:relationHallProductLeft} already hold in $\sH_\kA^\pm$ while \eqref{eq:relationK0reduced} follows from the definition of the \emph{reduced} Drinfeld double. 

\noindent
It remains to check that \eqref{eq:relationK0swapRight},\eqref{eq:relationK0swapLeft} and \eqref{eq:relationDrinfeld} are together equivalent to \eqref{eq:DrinfeldDoubleRelationGeneral} for all $a,b\in\sH_\kA$.\\
Given $Z\in\kA$ and $\alpha\in\sK_0(\kA)$, the relation $\mathrm{R}(k_\alpha, [Z])$ reads
\begin{align*}
\sum_{[X],[Y]} \langle X,Y\rangle \frac{P_{X,Y}^Z}{a_Z}\!\underbrace{\varphi(k_\alpha, [X]k_Y)}
_{\scriptsize =\begin{cases}(\alpha,Z),& \text{if}X=0\\ 0,& \text{else} \end{cases}}\! (k_\alpha\otimes [Y])
&=\sum_{[X],[Y]} \langle X,Y\rangle \frac{P_{X,Y}^Z}{a_Z}\!\underbrace{\varphi(k_\alpha, [Y])}
_{\scriptsize =\begin{cases}1,& \text{if }Y=0\\ 0,& \text{else} \end{cases}}\! (1\otimes [X]k_Y)(k_\alpha \otimes 1)\\[+5pt]
\Longleftrightarrow\qquad
(\alpha,Z)(k_\alpha\otimes [Z]) &= (1\otimes [Z])(k_\alpha\otimes 1)\\
\Longleftrightarrow\quad\qquad\;\:
(1 \otimes k_{-\alpha}[Z]) &= (-\alpha,Z)(1\otimes [Z]k_{-\alpha})
\end{align*}
which shows \eqref{eq:relationK0swapRight}. In a similar fashion, \eqref{eq:relationK0swapLeft} is implied by relations of the form $R([Z],k_\alpha)$.\\
For the last step we pick $a=[X]k_\alpha$ and $b=[Y]k_\beta$ in $\sH_\kA$.
The relevant coproducts are
\begin{align*}
\Delta([X]k_\alpha) &= \sum_{[X'],[X'']} \langle X',X''\rangle \frac{P_{X',X''}^X}{a_X} [X']k_{X''+\alpha}\otimes [X'']k_\alpha,\\
\Delta([Y]k_\beta) &= \sum_{[Y'],[Y'']} \langle Y',Y''\rangle \frac{P_{Y',Y''}^Y}{a_Y} [Y']k_{Y''+\beta} \otimes [Y'']k_\beta.
\end{align*}
The relation \eqref{eq:DrinfeldDoubleRelationGeneral} reads
\begin{align*}
&\sum_{} \langle X',X''\rangle\langle Y',Y''\rangle \frac{P_{X',X''}^XP_{Y',Y''}^Y}{a_Xa_Y}
\overbrace{\varphi([X'']k_\alpha,[Y']k_{Y''+\beta})}^{=(\alpha,Y''+\beta)\frac{\delta_{X'',Y'}}{a_{X''}}}\cdot
[X']k_{X''+\alpha}\otimes[Y'']k_\beta
\\ =
&\sum_{} \langle X',X''\rangle\langle Y',Y''\rangle \frac{P_{X',X''}^XP_{Y',Y''}^Y}{a_Xa_Y}
\underbrace{\varphi([X']k_{X''+\alpha},[Y'']k_\beta)}_{=(X''+\alpha,\beta)\frac{\delta_{X',Y''}}{a_{X'}}}\cdot
(1\otimes [Y']k_{Y''+\beta})([X'']k_\alpha \otimes 1)
\end{align*}
where both sums run over all isomorphism classes of objects $X',X'',Y',Y''\in\kA$. On the left-hand side we relabel them as $L=X',M=X''=Y',N=Y''$ and on the right-hand side as $L=X'',M=X'=Y'',N=Y'$.  We obtain
\begin{align*}
&\sum_{} \langle L,M\rangle\langle M,N\rangle \frac{P_{L,M}^XP_{M,N}^Y}{a_Xa_Ya_M}
(\alpha,N+\beta)\cdot
[L]k_{M+\alpha}\otimes[N]k_\beta
\\ 
&\hspace{2cm}=\sum_{} \langle M,L\rangle\langle N,M\rangle \frac{P_{M,L}^XP_{N,M}^Y}{a_Xa_Ya_M}
(L+\alpha,\beta)\cdot
(1\otimes [N]k_{M+\beta})([L]k_\alpha \otimes 1)
\end{align*}
where the sums now run over all isomorphism classes of objects $L,M,N\in\kA$. Using \eqref{eq:relationK0reduced}--\eqref{eq:relationK0swapLeft} we get
\begin{align*}
(\alpha,N+\beta) [L]k_{M+\alpha}\otimes[N]k_\beta
&= (\alpha,\beta)(\alpha,N) [L]k_{M}\otimes k_{-\alpha}[N]k_\beta \\
=(\alpha,\beta) [L]k_{M}\otimes [N]k_{-\alpha}k_\beta
&=(\alpha,\beta) [L]k_{M}\otimes [N]k_{\beta-\alpha}
\end{align*}
and
\begin{align*}
(L+\alpha,\beta) (1\otimes [N]k_{M+\beta})([L]k_\alpha \otimes 1)
&= (\alpha,\beta)(L,\beta) (1\otimes [N]k_{M})(k_{-\beta}[L]k_\alpha \otimes 1) \\
= (\alpha,\beta) (1\otimes [N]k_{M})([L]k_{-\beta}k_\alpha \otimes 1)
&= (\alpha,\beta) (1\otimes [N]k_{M})([L] \otimes k_{\beta-\alpha})
\end{align*}
such that we can cancel factors $(\alpha,\beta)$ and $1\otimes k_{\beta-\alpha}$ (from the left). In particular, relation $\mathrm{R}([X]k_\alpha,[Y]k_\beta)$ no longer depends on $\alpha$ and $\beta$ and simplifies to
\begin{align*}
&\sum_{} \langle L,M\rangle\langle M,N\rangle \frac{P_{L,M}^XP_{M,N}^Y}{a_Xa_Ya_M}
([L]k_M\otimes[N])
\\ 
&\hspace{2cm}=\sum_{} \langle M,L\rangle\langle N,M\rangle \frac{P_{M,L}^XP_{N,M}^Y}{a_Xa_Ya_M}
(1\otimes [N]k_M)([L] \otimes 1)\\[+5pt]
\Longleftrightarrow\qquad
&\sum_{} \frac{\langle L,M\rangle}{\langle N,M\rangle} \frac{P_{L,M}^XP_{M,N}^Y}{a_Xa_Ya_M}
([L]\otimes[N]k_M^{-1})
=
\sum_{} \frac{\langle N,M\rangle}{\langle L,M\rangle} \frac{P_{M,L}^XP_{N,M}^Y}{a_Xa_Ya_M}
(1\otimes [N])([L] \otimes k_M).
\end{align*}
In order to eliminate the summation over $M$, we compute
\begin{align*}
\frac{\langle L,M\rangle}{\langle N,M\rangle}
= \frac{\langle L,X-L\rangle}{\langle N,X-L\rangle}
=\langle L-N,X\rangle\frac{\langle N,L\rangle}{\langle L,L\rangle}
=\langle X-Y,X\rangle\frac{\langle N,L\rangle}{\langle L,L\rangle}
=\frac{\langle X,X\rangle}{\langle Y,X\rangle}\frac{\langle N,L\rangle}{\langle L,L\rangle}
\end{align*}
and
\begin{align*}
\frac{\langle N,M\rangle}{\langle L,M\rangle}
= \frac{\langle N,Y-N\rangle}{\langle L,Y-N\rangle}
=\langle N-L,Y\rangle\frac{\langle L,N\rangle}{\langle N,N\rangle}
=\langle Y-X,Y\rangle\frac{\langle L,N\rangle}{\langle N,N\rangle}
=\frac{\langle Y,Y\rangle}{\langle X,Y\rangle}\frac{\langle L,N\rangle}{\langle N,N\rangle}
\end{align*}
which leads to
\begin{align}\label{eq:relationDrinfeldWithM}
\nonumber
&\frac{\langle X,X\rangle}{\langle Y,X\rangle}\sum_{[L],[N]} \frac{\langle N,L\rangle}{\langle L,L\rangle} \frac{1}{a_Xa_Y}\overbrace{\sum_{[M]}\frac{P_{L,M}^XP_{M,N}^Y}{a_M}}^{=a_La_Nc_{Y,X}^{N[1]\oplus L}}
([L]\otimes[N]k_{L-X})
\\
&\hspace{2cm}=\frac{\langle Y,Y\rangle}{\langle X,Y\rangle}\sum_{[L],[N]} \frac{\langle L,N\rangle}{\langle N,N\rangle} \frac{1}{a_Xa_Y}\underbrace{\sum_{[M]}\frac{P_{M,L}^XP_{N,M}^Y}{a_M}}_{=a_La_Nc_{X,Y}^{L[1]\oplus N}}
(1\otimes [N])([L] \otimes k_{X-L}).
\end{align}
By Lemma \ref{lemma:cEqualSum}, this is equivalent to \eqref{eq:relationDrinfeld}, which completes the proof that \eqref{eq:relationLeftRight}--\eqref{eq:relationDrinfeld} are satisfied. It is easy to see that this list of relations is complete.
\end{proof}

\begin{remark} The list of relations given in Proposition \ref{P:RelationsDouble} is, of course, not new. It already appears, essentially in the same form, in the work of Sevenhant and Van den Bergh \cite[Section 8]{SevenhBergh}.
We point out that the relations \eqref{eq:relationLeftRight}--\eqref{eq:relationHallProductLeft} coincide with \cite[(5.2)--(5.9)]{Cramer_DoubleHallAlgebras} modulo the fact that our conventions for the multiplication in the twisted Hall algebra differ from those used by Cramer. In the same way, relation \eqref{eq:relationDrinfeldWithM} still coincides with \cite[(4.5/4.6)]{Cramer_DoubleHallAlgebras}. While we use Lemma \ref{lemma:cEqualSum} to further simplify this, Cramer's computations are based on \cite[Proposition 2.4.3]{Kapranov_HeisenbergDouble} which is in general incorrect (see Example \ref{ex:counterexample} below).
More precisely, Cramer substituted
\begin{align*}
\sum_{[M]}\frac{P_{M,L}^XP_{N,M}^Y}{a_M} = \frac{T_{X,Y}^{L[1]\oplus N}}{\big|\Ext^1_\kA(N,L)\big|}
\end{align*}
on the right-hand side of \eqref{eq:relationDrinfeldWithM} where
\begin{align*}
T_{X,Y}^{L[1]\oplus N}=
\left|\left\{
\begin{array}{c}
\text{distinguished triangles}\\
X\longrightarrow Y \longrightarrow L[1]\oplus N \longrightarrow X[1] 
\end{array}
\right\}\right|
\end{align*}
(and analogously on the left-hand side of \eqref{eq:relationDrinfeldWithM}).
Hence, Cramer's relation \cite[(5.10/5.11)]{Cramer_DoubleHallAlgebras} is incorrect and parts of the proof of his main result need to be modified in accordance with the corrected relation \eqref{eq:relationDrinfeld}.
\end{remark}

\begin{example}\label{ex:counterexample}
A concrete counterexample to \cite[Proposition 2.4.3]{Kapranov_HeisenbergDouble} (respectively formula \eqref{E:KaFormula}) can be constructed for the hereditary category $\kA=\kk[\, t\,]\!-\!\textsf{fdmod}$. We consider the objects
\begin{align*}
X=Y=\kk[\,t\,]/(t^2) \qquad\text{ and }\qquad L=N=\kk[\,t\,]/(t)
\end{align*}
such that $\Ext_\kA^1(N,L)\cong\kk$ and the right-hand side of \eqref{E:KaFormula} equals $q(q-1)^3$. In order to count the number of distinguished triangles, let us fix a map $\psi: X\longrightarrow Y$ with $\cone(\psi)\cong L[1]\oplus N$. Since $\kA$ is hereditary, $\psi$ has kernel $L$ and cokernel $N$ and we get an exact sequence
\begin{align*}
0\longrightarrow L \stackrel{\iota}{\longrightarrow} X \stackrel{\psi}{\longrightarrow} Y \stackrel{\pi}{\longrightarrow} N \longrightarrow 0.
\end{align*}
The axioms of triangulated categories imply that the automorphism group
\begin{align*}
G_{LN} = \Aut_{D^b(\kA)}\bigl(L[1]\oplus N\bigr)\cong\begin{pmatrix} \Aut_\kA(L) & \Ext_\kA^1(N,L) \\ 0 & \Aut_\kA(N) \end{pmatrix}
\end{align*}
 acts transitively on the set of distinguished triangles of the form 
$X \stackrel{\psi}\lar Y \lar L[1] \oplus N \lar X[1]$.
 If this action were always free, we could deduce \eqref{E:KaFormula} from it. However, we will show that this is not the case in our example. Given an element $\begin{pmatrix}f & w\\0 & g\end{pmatrix} \in G_{LN}$
  belonging to  the stabiliser of an appropriate  distinguished triangle, we obtain a commutative diagram

\begin{align*}
\xymatrix{
X \ar[rr]^\psi \ar@{=}[dd] && Y \ar[rr]^(.4){\begin{pmatrix}\xi\\[-3pt]\pi\end{pmatrix}} \ar@{=}[dd] && L[1]\oplus N \ar[rr]^(.55){\begin{pmatrix}\iota[1]\;\eta\end{pmatrix}} \ar[dd]^(.4){\begin{pmatrix}f & w\\0 & g\end{pmatrix}} && X[1] \ar@{=}[dd] \\
&&&&&& \\
X \ar[rr]^\psi        && Y \ar[rr]^(.4){\begin{pmatrix}\xi\\[-3pt]\pi\end{pmatrix}}        && L[1]\oplus N \ar[rr]^(.55){\begin{pmatrix}\iota[1]\;\eta\end{pmatrix}}        && X[1].       
}
\end{align*}
Since $\pi$ is a cokernel of $\psi$, it is an epimorphism in $\kA$ which yields $g=\id$. Analogously, we obtain $f=\id$. It follows that the stabiliser group of the above triangle is isomorphic to the abelian group
\begin{align*}
\left\{
w\in\Ext_\kA^1(N,L)\;\bigg\vert\; \begin{array}{c} w\circ\pi=0 \\ \iota\circ w=0 \end{array}
\right\}
=
\ker\left(
\Ext_\kA^1(N,L)\xrightarrow{~~\begin{pmatrix}\pi^*\\[-3pt] \iota_* \end{pmatrix}~~} \Ext_\kA^1(Y,L) \oplus \Ext_\kA^1(N,X)
\right).
\end{align*}
A direct computation shows that the maps $\pi^*$ and $\iota_*$ are both zero. This also can be deduced from the fact that
$$
0 \lar \kk[\,t\,]/(t) \lar \kk[\,t\,]/(t^2) \lar \kk[\,t\,]/(t) \lar 0
$$
is an almost split sequence in $\kA$.
Hence, the aforementioned stabiliser group is isomorphic to $\Ext_\kA^1(N,L)$. We can conclude that the desired number of distinguished triangles equals
\begin{align*}
T_{X,Y}^{L[1]\oplus N}= c_{X,Y}^{L[1]\oplus N}\cdot \frac{\big|\Aut_{D^b(\kA)}(L[1]\oplus N)\big|}{|\Ext_\kA^1(N,L)|}
= c_{X,Y}^{L[1]\oplus N}\cdot \big|\Aut_{\kA}(L)\big| \big|\Aut_{\kA}(N)\big| = (q-1)^3.
\end{align*} 
Since the right-hand side of \eqref{E:KaFormula} equals $q(q-1)^3$, this shows that \cite[Proposition 2.4.3]{Kapranov_HeisenbergDouble} does not hold in general. The reason for this failure is that the above map 
$\begin{pmatrix}\pi^*\\[-3pt] \iota_* \end{pmatrix}$ 
need not be injective in general.
\end{example}

\begin{remark}\label{remark:SimplifiedRelationHomVanishing}
Note that \eqref{eq:relationDrinfeld} simplifies in the case where there are no non-trivial morphisms between the objects $X$ and $Y$ in one or even both directions. For instance, if $\Hom_{\kA}(X,Y)=0$ then we have
\begin{align}
\label{eq:cWithHomVanishing}
c_{X,Y}^{L[1]\oplus N} =
\begin{cases}
1, & \text{ if } L=X \text{ and } N=Y,\\
0, & \text{ otherwise}
\end{cases}
\end{align}
such that there is only one non-zero summand in the sum on the right-hand side. In particular, for $X,Y\in\kA$ with $\Hom_\kA(X,Y)=0=\Hom_\kA(Y,X)$ the relation \eqref{eq:relationDrinfeld} simply reads
\begin{align*}
([X]\otimes [Y])=(1\otimes [Y])([X]\otimes 1).
\end{align*}
\end{remark}

\section{Derived equivalences and reduction of relations}
Let $\kA,\kB$ be hereditary categories and let $F:~\kD^b(\kA)\longrightarrow\kD^b(\kB)$ be an equivalence of triangulated categories. For every $i\in\ZZ$ we define
\begin{align*}
\kA_i&=\bigl\{X\in \kA~\vert~F(X)\in \kB[\,i\,]\bigr\}\subseteq\kA\\
\kB_i&=\bigl\{F(X)[-i]~\vert~X\in \kA_i\bigr\}\subseteq\kB.
\end{align*} 
It is clear that
\begin{align}
\label{eq:orthogonal}
\begin{array}{rl}
\Hom_\kA(\kA_i,\kA_j) =0,&\qquad\text{ unless }\quad j\in\{i,i+1\},\\[+3pt]
\Ext^1_\kA(\kA_i,\kA_j) =0,&\qquad\text{ unless }\quad j\in\{i-1,i\},\\[+3pt]
\Hom_\kB(\kB_i,\kB_j) =0,&\qquad\text{ unless }\quad j\in\{i-1,i\},\\[+3pt]
\Ext^1_\kB(\kB_i,\kB_j) =0,&\qquad\text{ unless }\quad j\in\{i,i+1\}.
\end{array}
\end{align}

Since $\kA$ and $\kB$ are hereditary, every indecomposable object in $\kD^b(\kA)$ and $\kD^b(\kB)$ is a complex concentrated in a single degree. Therefore, we get a decomposition
\begin{align*}
\kA = \qquad\ldots\;\vee\; \kA_{-1} \;\vee\; \kA_0 \;\vee\; \kA_1 \;\vee\; \ldots
\end{align*}
such that every object $X\in\kA$ can be written uniquely as a (finite) direct sum
\begin{align*}
X = X_i\oplus X_{i+1} \oplus \ldots \oplus X_{i+n}
\end{align*}
for some $i\in\ZZ$, $n\geq 0$ and $X_j\in\kA_j$. As an immediate consequence of \eqref{eq:orthogonal} we observe that
\begin{align}\label{eq:multiplicativeDecomposition}
[X_i][X_{i+1}]\cdot \ldots \cdot [X_{i+n}] = \prod_{i\leq l<m\leq i+n} \langle X_l,X_m\rangle \cdot [X]
\end{align}
in $\sH_\kA$. Using this, it is straightforward to check that the relations \eqref{eq:relationLeftRight}--\eqref{eq:relationHallProductLeft} are implied for all $X,Y\in\kA$ if we assume that they hold for all $X\in\kA_i$ and $Y\in\kA_j$ for all $i,j\in\ZZ$. In order to get a similar simplification for relation \eqref{eq:relationDrinfeld} we will make use of the following auxiliary statement; see \cite[Lemma 3.3]{BurbanSchiffmann_HallAlgebraEllipticCurve} or \cite[Proposition A.25]{HongTsymbaliuk}.

\begin{lemma}\label{lemma:DrinfeldRelationMultiplicative}
Let $n\geq 1$ and $a, b_0,\ldots,b_n, c_0,\ldots, c_n, d\in\sH_\kA$. Then we have the implications
\begin{align*}
\mathrm{R}\left(a_i^{(k)},b_k\right)\quad \begin{array}{l}\forall\;0\leq k\leq n \\ \forall\; i \end{array} \qquad\Longrightarrow\qquad \mathrm{R}(a,b_0b_1\ldots b_n)
\end{align*}
and
\begin{align*}
\mathrm{R}\left(c_k,d_j^{(k)}\right)\quad \begin{array}{l}\forall\;0\leq k\leq n \\ \forall\; j \end{array} \qquad\Longrightarrow\qquad \mathrm{R}(c_0c_1\ldots c_n,d)
\end{align*}
where we use the notation
\begin{align*}
\Delta^n(a)=\sum_i a_i^{(0)}\otimes\ldots\otimes a_i^{(n)}\qquad\mathrm{and}\qquad
\Delta^n(d)=\sum_j d_j^{(0)}\otimes\ldots\otimes d_j^{(n)}.
\end{align*}
\end{lemma}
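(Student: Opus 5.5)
We argue by induction on $n$, treating only the first implication; the second follows by the symmetric argument, or by applying the first to the opposite (co)algebra structure. The proof is purely Hopf-algebraic. We use only three facts: $\Delta$ is an algebra homomorphism (in the topological setting, with respect to the completed tensor product), $\Delta$ is coassociative, and $\varphi$ is a bialgebra pairing. The latter means
\begin{align*}
\varphi(a, bb') = \sum_i \varphi\bigl(a_i^{(1)}, b\bigr)\varphi\bigl(a_i^{(2)}, b'\bigr)
\end{align*}
together with the dual identity $\varphi(aa',b)=\varphi(a\otimes a',\Delta(b))$, up to the ordering convention fixed by Green's pairing.

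The heart of the argument is the case $n=1$: we show that $\mathrm{R}(a_i^{(0)},b_0)$ for all $i$ and $\mathrm{R}(a_i^{(1)},b_1)$ for all $i$ imply $\mathrm{R}(a,b_0b_1)$.

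1. Write $\Delta(b_0b_1)=\Delta(b_0)\Delta(b_1)$, so the left-hand side of $\mathrm{R}(a,b_0b_1)$ becomes
\begin{align*}
\sum \varphi\bigl(a^{(2)}, b_0^{(1)}b_1^{(1)}\bigr)\, a^{(1)+}\, b_0^{(2)-}b_1^{(2)-}.
\end{align*}
2. Split the pairing via $\Delta(a^{(2)})$. By coassociativity we may rewrite everything with the threefold coproduct $\Delta^2(a)=\sum a^{(0)}\otimes a^{(1)}\otimes a^{(2)}$, reindexed as appropriate.
3. Group the terms so that a factor $\sum\varphi(\,\cdot\,,b_0^{(1)})\,(\cdot)^{+}b_0^{(2)-}$ appears. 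This is exactly the left-hand side of an instance of $\mathrm{R}(x,b_0)$, where $x$ runs over the tensor factors of $\Delta(a)$. Apply the hypothesis to move $b_0^{-}$ to the left of the $+$-part.
4. In the remaining expression, a factor of the shape of the left-hand side of $\mathrm{R}(y,b_1)$ appears. Here $y$ is again a coproduct component of $a$; coassociativity identifies it with some $a_i^{(1)}$ in the notation of $\Delta^1$. Apply the hypothesis once more.
5. Reassemble the pairings using the bialgebra property, now in the opposite order. The result is the right-hand side of $\mathrm{R}(a,b_0b_1)$.

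The main obstacle is the bookkeeping in steps 3–5. When $\mathrm{R}(x,b_0)$ is applied, the element $x$ is a component of an iterated coproduct of $a$ carrying extra tensor legs that are "spectators". These legs must be placed consistently: some sit inside pairings with $b_1$-components, and one stays as a $+$-factor. We have to check that the sums can be reorganised so that the hypothesis is applied to a genuine full instance $\mathrm{R}(x,b_0)$, summed over the coproduct of $x$. We handle this by writing everything in terms of $\Delta^3(a)$ and using $(\Delta\otimes\id)\Delta=(\id\otimes\Delta)\Delta$ to identify the relevant sums.

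In the topological case we must also justify the rearrangements of possibly infinite sums. This follows because $\varphi$ pairs only equal isomorphism classes, so each pairing is nonzero for only finitely many terms once the classes of $b_0$ and $b_1$ are fixed. Hence all sums that actually contribute are finite.

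For general $n$, write $b_0\cdots b_n=(b_0\cdots b_{n-1})b_n$ and apply the case $n=1$ with $b_0$ replaced by $b_0\cdots b_{n-1}$. The required relations $\mathrm{R}(a_i^{(0)},b_0\cdots b_{n-1})$ follow from the induction hypothesis applied to $a_i^{(0)}$. The hypotheses needed for that step are indexed by the components of $\Delta^{n-1}(a_i^{(0)})$, and by coassociativity these are precisely the components $a_i^{(0)},\dots,a_i^{(n-1)}$ of $\Delta^n(a)$. This completes the induction.
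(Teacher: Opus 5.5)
Your $n=1$ computation is correct. It is the standard Sweedler-notation argument behind this lemma; the paper gives no proof of its own and only cites the result. Only three legs of $a$ ever occur in it, so $\Delta^2(a)$ suffices rather than $\Delta^3(a)$.

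The weak point is the reduction of general $n$ to $n=1$. The hypothesis is attached to one fixed decomposition $\Delta^n(a)=\sum_i a_i^{(0)}\otimes\cdots\otimes a_i^{(n)}$. To apply the case $n=1$ to $(b_0\cdots b_{n-1})\cdot b_n$, you need two things:
\begin{itemize}
\item a decomposition $\Delta(a)=\sum_j c_j\otimes d_j$ with $\mathrm{R}(d_j,b_n)$ for all $j$;
\item for each $c_j$, a decomposition of $\Delta^{n-1}(c_j)$ whose $k$-th legs satisfy $\mathrm{R}(\,\cdot\,,b_k)$, so that the induction hypothesis applies.
\end{itemize}
Coassociativity only gives the equality of tensors $(\Delta^{n-1}\otimes\id)\Delta(a)=\Delta^n(a)$. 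It does not say that expanding $\sum_j\Delta^{n-1}(c_j)\otimes d_j$ reproduces the given components: $\Delta(a)$ has many decompositions, and terms can cancel. So the sentence ``by coassociativity these are precisely the components'' is not yet an argument.

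The simplest repair is to drop the induction and run your $n=1$ computation once with $n+1$ factors. Expand $\Delta(b_0\cdots b_n)=\Delta(b_0)\cdots\Delta(b_n)$ and, in Sweedler notation, $\varphi\bigl(a_{(2)},b_{0(1)}\cdots b_{n(1)}\bigr)$. Then the left-hand side of $\mathrm{R}(a,b_0\cdots b_n)$ is a linear function of $\Delta^{n+1}(a)$, and coassociativity gives
\begin{align*}
\Delta^{n+1}(a)=\sum_i a_i^{(0)}\otimes\cdots\otimes\Delta\bigl(a_i^{(k)}\bigr)\otimes\cdots\otimes a_i^{(n)}\qquad\text{for every }0\le k\le n,
\end{align*}
always with the \emph{same} components. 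Use this regrouping for $k=0,1,\ldots,n$ in turn. At step $k$ you apply a full instance of $\mathrm{R}(a_i^{(k)},b_k)$, exactly as in your steps 3--4, moving $b_k^-$ to the left of the corresponding $+$-part. At the end you reassemble the pairings into the right-hand side as in your step 5.

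If you prefer to keep the induction, you must add one of the following:
\begin{itemize}
\item $\Delta^{n-1}$ is injective, since the counit gives a left inverse. Then linear algebra shows that $\Delta^n(a)\in V_0\otimes\cdots\otimes V_n$ forces $\Delta(a)\in(\Delta^{n-1})^{-1}(V_0\otimes\cdots\otimes V_{n-1})\otimes V_n$, where $V_k$ is the space of $x$ with $\mathrm{R}(x,b_k)$.
\item In the setting of Proposition~\ref{prop:reductionOfRelations}, all coefficients of $\Delta$ in the basis $[X]k_\alpha$ are positive. Hence supports of iterated coproducts are unions of supports and nothing cancels.
\end{itemize}

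Finally, your finiteness remark for $\kA=\mathit{Coh}(\XX)$ is too quick. That $\varphi$ pairs only equal isomorphism classes does not by itself make the sums finite, because both arguments of a pairing range over infinitely many classes. One needs a slope-boundedness argument, which belongs to the topological framework the paper takes from the literature.
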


\begin{proposition}\label{prop:reductionOfRelations}
Assume that one of the following conditions is satisfied:
\begin{enumerate}
\item $\kA$ is a length category.
\item $\kA=\mathit{Coh}(\XX)$ for a complete non-commutative hereditary curve $\XX$ over $\kk$. Furthermore, $\kA_i=0$ for all $i\neq 0,1$ and $\mathit{Tor}(\XX)\subseteq\kA_1$ \footnote{Note that in this case $(\kA_1,\kA_0)$ is a split torsion pair in $\kA$. Moreover the first part of the condition essentially means that the equivalence $F$ is \emph{friendly} in the sense of \cite[§5.3]{OlivierNotes}}.
\end{enumerate}
Then the relations \eqref{eq:relationLeftRight}--\eqref{eq:relationDrinfeld} for all $X,Y\in \kA$ are implied by the same relations for all $X\in\kA_i$, $Y\in\kA_j$ and $i,j\in\ZZ$.
\end{proposition}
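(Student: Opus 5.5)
The plan is to reduce everything to Lemma \ref{lemma:DrinfeldRelationMultiplicative} together with the multiplicative decomposition \eqref{eq:multiplicativeDecomposition}. The paper already notes that \eqref{eq:relationLeftRight}--\eqref{eq:relationHallProductLeft} reduce to the pieces $\kA_i$, so only \eqref{eq:relationDrinfeld} needs work. By the proof of Proposition \ref{P:RelationsDouble}, this relation is equivalent to $\mathrm{R}([X],[Y])$ modulo the $k$-relations, so it suffices to show that $\mathrm{R}([X],[Y])$ for all $X,Y\in\kA$ follows from $\mathrm{R}([X'],[Y'])$ with $X'\in\kA_i$, $Y'\in\kA_j$. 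Relations of the form $\mathrm{R}(k_\alpha,-)$ are already covered by \eqref{eq:relationK0swapRight} and \eqref{eq:relationK0swapLeft}, and $\mathrm{R}(a k_\alpha,b k_\beta)$ is equivalent to $\mathrm{R}(a,b)$ by the computation in that proof.

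\emph{Step 1 (split $Y$).} Write $Y=Y_j\oplus\dots\oplus Y_{j+m}$ with $Y_l\in\kA_l$. By \eqref{eq:multiplicativeDecomposition}, $[Y]$ is a nonzero scalar multiple of $[Y_j]\cdots[Y_{j+m}]$, and $\mathrm{R}$ is bilinear. The first implication of Lemma \ref{lemma:DrinfeldRelationMultiplicative} then reduces $\mathrm{R}([X],[Y])$ to the relations $\mathrm{R}([X]^{(k)}_i,[Y_l])$, where the $[X]^{(k)}_i$ are the tensor factors of $\Delta^m([X])$. These factors are of the form $[X^{(k)}]k_\gamma$ for subquotients $X^{(k)}$ of $X$. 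In the length-category case all sums are finite and this is purely formal. In case (2), $\Delta^m$ may be an infinite sum. Here I would use the topological bialgebra framework of \cite[Appendix B]{BurbanSchiffmann_HallAlgebraEllipticCurve}, together with the fact that, for fixed $Y_l$, only finitely many terms pair nontrivially.

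\emph{Step 2 (split the first argument).} Each subquotient $X^{(k)}$ again decomposes as a sum of pieces in the $\kA_i$. Applying \eqref{eq:multiplicativeDecomposition} and the second implication of Lemma \ref{lemma:DrinfeldRelationMultiplicative} reduces $\mathrm{R}([X^{(k)}],[Y_l])$ to $\mathrm{R}([X'_i],d^{(k)})$, where $X'_i\in\kA_i$ and the $d^{(k)}$ are the coproduct factors of $[Y_l]$. This step needs the $d^{(k)}$ to remain in a single piece $\kA_l$. That is the main obstacle, because $\kA_l$ need not be closed under subobjects or quotients.

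\emph{Resolving the obstacle.} In case (2) I would use the split torsion pair $(\kA_1,\kA_0)$. A subobject of $Y_l$ lies in $\kA_0$ when $Y_l\in\kA_0$ (torsion-free objects have torsion-free subobjects), and a quotient of $Y_l$ lies in $\kA_1$ when $Y_l\in\kA_1$. The reduction should therefore be ordered so that only the closed side is used. For instance, one can choose to apply $\Delta$ to the argument whose relevant coproduct factors stay in the same piece, or use the Hom-vanishing in \eqref{eq:orthogonal} to show that the mixed terms are handled by Remark \ref{remark:SimplifiedRelationHomVanishing}.

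In case (1), since $\kA$ is a length category, I would first reduce via Lemma \ref{lemma:DrinfeldRelationMultiplicative} to simple, or more generally indecomposable, objects on one side. I would then use induction on length, so that subquotients appearing in coproducts are treated by the inductive hypothesis. Equivalently, I would prove the statement for all pairs ordered by total length, using \eqref{eq:multiplicativeDecomposition} to split whenever an object is not contained in a single $\kA_i$. The hardest point is making this induction close, namely checking that every object in a coproduct factor of $[X]$, for $X\in\kA_i$, either lies in some single $\kA_{i'}$ or has strictly smaller length so the hypothesis applies. I would verify this first.
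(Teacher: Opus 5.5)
Your plan for case (1) closes in its ``all pairs by total length'' form. If $Y$ has at least two nonzero pieces, each piece is strictly shorter than $Y$, while the tensor factors of $\Delta^n([X])$ are subquotients of $X$. Lemma \ref{lemma:DrinfeldRelationMultiplicative} therefore reduces $\mathrm{R}([X],[Y])$ to pairs of smaller total length, and symmetrically when only $X$ splits. So the check you flag as the hardest point is automatic. This is the paper's induction, which it phrases as ``$\mathrm{R}([X],[Y])$ holds whenever one argument lies in a single $\kA_i$''. Neither your Step 2 nor a reduction to simple or indecomposable objects is needed.

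The genuine gap is case (2): you give no argument that the reduction terminates, and none of your suggested fixes supplies one. Whichever argument you split, Lemma \ref{lemma:DrinfeldRelationMultiplicative} pairs one piece with quotients of the other argument and the other piece with its subobjects. At most one of these two families stays in a single $\kA_i$.

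For instance, take $X\in\kA_0$ and $[Y]\propto[Y_0][Y_1]$. You then need $\mathrm{R}(X',Y_0)$ for every quotient $X'$ of $X$. Splitting $X'=X'_0\oplus X'_1$ in turn requires $\mathrm{R}(X'_0,Y'_0)$ for every quotient $Y'_0$ of $Y_0$, which is a problem of the original shape. Since $\mathit{Coh}(\XX)$ is not a length category, nothing in your proposal stops this regress. Remark \ref{remark:SimplifiedRelationHomVanishing} does not help either, since it only simplifies the form of \eqref{eq:relationDrinfeld} for one pair and says nothing about products.

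The missing idea is an induction on $\rk(X)+\rk(Y)$. This is exactly where the hypothesis $\mathit{Tor}(\XX)\subseteq\kA_1$ enters, and your proposal never uses it.
\begin{itemize}
\item Torsion sheaves and all their subquotients lie in $\kA_1$. Splitting the other argument therefore shows that $\mathrm{R}([X],[Y])$ holds whenever $\rk X=0$ or $\rk Y=0$; this is the base case.
\item Nonzero objects of $\kA_0$ have positive rank, so the pairing on the open side is either covered by the assumption or has strictly smaller total rank. For $X\in\kA_0$, either $X'=X$, or $\ker(X\to X')$ is a nonzero object of $\kA_0$ and so $\rk X'<\rk X$. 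For $X\in\kA_1$, either $Y_0\neq 0$ and so $\rk Y_1<\rk Y$, or $Y\in\kA_1$ already.
\end{itemize}
At each rank level these two special cases must be established before the general case, which splits $X=X_0\oplus X_1$. The reason is that splitting an object whose $\kA_1$-summand is torsion does not lower its rank.
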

\begin{proof}
We start with the case where $\kA$ is a length category. We will prove using induction on $\ell(X)+\ell(Y)$ that the relation $\mathrm{R}([X],[Y])$ holds whenever one of the two objects $X,Y$ is contained in some $\kA_i$. The initial step is clear since every simple object is contained in one of the subcategories $\kA_i$. Let $r\geq 2$ and suppose the statement holds whenever $\ell(X)+\ell(Y)\leq r$. In order to verify the inductive step, we take two objects $X,Y\in\kA$ with $\ell(X)+\ell(Y)=r+1$. Without loss of generality we can assume that $X\in\kA_i$ for some $i\in\ZZ$. As we have already seen in \eqref{eq:multiplicativeDecomposition}, we can write
\begin{align*}
[Y] = C\cdot [Y_j]\cdot\ldots\cdot[Y_{j+n}]
\end{align*}
for some constant $C\in\widetilde\QQ$ and objects $Y_l\in\kA_l$. On the other hand, we can write
\begin{align*}
\Delta^n([X]) = \sum_k [X_k^{(0)}]\otimes\ldots\otimes [X_k^{(n)}]
\end{align*}
where we omit constants and contributions from $\widetilde\QQ[\sK_0(\kA)]$ since the validity of \eqref{eq:DrinfeldDoubleRelationGeneral} does not depend on them. We claim that for every $0\leq m\leq n$ and for every $k$, the relation $\mathrm{R}(X_k^{(m)},Y_{j+m})$ is satisfied. If $X_k^{(m)}=X\in\kA_i$, then this follows from the assumption. Otherwise, it follows from the induction hypothesis since in this case we have
\begin{align*}
\underbrace{\ell(X_k^{(m)})}_{<\ell(X)}+\underbrace{\ell(Y_{j+m})}_{\leq \ell(Y)} < \ell(X)+\ell(Y).
\end{align*}
Hence, Lemma \ref{lemma:DrinfeldRelationMultiplicative} ensures that $\mathrm{R}([X],[Y])$ holds. This completes the inductive proof that $\mathrm{R}([X],[Y])$ holds whenever one of the objects $X,Y$ is contained in some $\kA_i$. The general case is straightforward using once again the decomposition of the form \eqref{eq:multiplicativeDecomposition} and Lemma \ref{lemma:DrinfeldRelationMultiplicative}. Hence the assertion holds true if $\kA$ is a length category.\\
Now let us assume that $\kA=\mathit{Coh}(\XX)$ and $(\kA_1,\kA_0)$ is a split torsion pair in $\kA$, with all torsion sheaves contained in $\kA_1$. We will use that in this case $\kA$ is equipped with a rank function $\rk$ which is additive on short exact sequences and such that $\mathit{Tor}(\XX)=\bigl\{X\in \kA\vert \rk(X)=0\bigr\}$ is an abelian subcategory.
\\ \underline{Claim 1:} $\mathrm{R}([X],[Y])$ holds whenever $\rk(X)=0$ or $\rk(Y)=0$.\\
Suppose $\rk(Y)=0$. Then we write $X=X_0\oplus X_1$ with $X_i\in\kA_i$ and by Lemma \ref{lemma:DrinfeldRelationMultiplicative} it suffices to show the relations
\begin{align}\label{eq:claim1}
\mathrm{R}(X_0,Y^{(0)})\qquad\mathrm{ and }\qquad \mathrm{R}(X_1,Y^{(1)})
\end{align}
for every short exact sequence
\begin{align*}
0\longrightarrow Y^{(1)} \longrightarrow Y \longrightarrow Y^{(0)} \longrightarrow 0
\end{align*}
in $\kA$. We automatically have $\rk(Y^{(0)})=0=\rk(Y^{(1)})$ and the relations \eqref{eq:claim1} both hold by assumption since $\mathit{Tor}(\XX)\subseteq \kA_1$. The same argument works in the case $\rk(X)=0$. This proves Claim 1.
\\ \underline{Claim 2:} $\mathrm{R}([X],[Y])$ holds for all $X,Y\in\kA$. \\
We use induction on $\rk(X)+\rk(Y)$ and assume that the statement holds if $\rk(X)+\rk(Y)\leq r-1$. Let $X,Y\in\kA$ with $\rk(X)+\rk(Y)=r$. First, we consider the special case $X\in\kA_0$.
We decompose $Y=Y_0\oplus Y_1$ with $Y_i\in\kA_i$ such that by Lemma \ref{lemma:DrinfeldRelationMultiplicative} it suffices to prove the relations
\begin{align}\label{eq:claim2}
\mathrm{R}(X^{(0)},Y_0)\qquad\mathrm{ and }\qquad \mathrm{R}(X^{(1)},Y_1)
\end{align}
for every short exact sequence
\begin{align}\label{eq:ses}
0\longrightarrow X^{(1)} \longrightarrow X \longrightarrow X^{(0)} \longrightarrow 0
\end{align}
in $\kA$. Since $\Hom_\kA(\kA_1,\kA_0)=0$ we have $X^{(1)}\in\kA_0$ which implies the second relation of \eqref{eq:claim2}. If $X^{(0)}=X$, then the first relation of \eqref{eq:claim2} holds by assumption. If $X^{(0)}\neq X$ then $X^{(1)}\neq 0$, hence $\rk(X^{(1)})>0$ because $\mathit{Tor}(\XX)\subseteq\kA_1$. It follows that $\rk(X^{(0)})<\rk(X)$ and $\mathrm{R}(X^{(0)},Y_0)$ holds by the induction hypothesis. This proves the inductive step for the special case $X\in\kA_0$.
Second, we consider the special case $X\in\kA_1$. We decompose $Y=Y_0\oplus Y_1$ with $Y_i\in\kA_i$. By Lemma \ref{lemma:DrinfeldRelationMultiplicative} it suffices to prove the relations \eqref{eq:claim2} for every short exact sequence \eqref{eq:ses} in $\kA$. This time, the vanishing of $\Hom_\kA(\kA_1,\kA_0)$ yields $X^{(0)}\in\kA_1$ such that the first relation of \eqref{eq:claim2} is satisfied by assumption. If $\rk(Y_1)<\rk(Y)$, then the second relation of \eqref{eq:claim2} follows from the induction hypothesis. If $\rk(Y_1)=\rk(Y)$, then the assumption $\mathit{Tor}(\XX)\subseteq\kA_1$ implies $Y_0=0$ and $Y^{(1)}=Y$. Thus, the second relation of \eqref{eq:claim2} is also clear. This proves the inductive step in the special case $X\in\kA_1$. Finally, we consider the general case. We decompose $X=X_0\oplus X_1$ with $X_i\in\kA_i$ and by Lemma \ref{lemma:DrinfeldRelationMultiplicative} it suffices to verify the relations
\begin{align}\label{eq:claim2general}
\mathrm{R}(X_0,Y^{(0)})\qquad\mathrm{ and }\qquad \mathrm{R}(X_1,Y^{(1)})
\end{align}
for every short exact sequence
\begin{align*}
0\longrightarrow Y^{(1)} \longrightarrow Y \longrightarrow Y^{(0)} \longrightarrow 0
\end{align*}
in $\kA$. But now the two relations in \eqref{eq:claim2general} follow from the two special cases that we have checked before. Hence $\mathrm{R}([X],[Y])$ is also satisfied in this case which completes the inductive step and the proof of the assertion of the proposition.
\end{proof}

We proceed with a few other consequences of \eqref{eq:orthogonal}, that will be useful later on.
\begin{lemma}\label{lemma:locatingExtensionsKernelsCokernels}
\begin{enumerate}
\item[(1)]
Let
\begin{align*}
0\longrightarrow Y \longrightarrow Z \longrightarrow X \longrightarrow 0
\end{align*}
be a short exact sequence in $\kA$ with $Y\in\kA_0$ and $X\in \kA_1$. Then $Z=Z_0\oplus Z_1$ for some $Z_0\in\kA_0$ and $Z_1\in\kA_1$.
\item[(2)]
Let $\psi\in\Hom_\kA(Y, X)$ be a morphism with $X,Y\in\kA_0$. Then this gives rise to an exact sequence
\begin{align*}
0\longrightarrow N_{-1}\oplus N_0 \longrightarrow Y \stackrel{\psi}\longrightarrow X \longrightarrow L_0\oplus L_1\longrightarrow 0
\end{align*}
with $N_{-1}\in\kA_{-1}$, $N_0,L_0\in\kA_0$ and $L_1\in\kA_1$.
\item[(3)]
Let $\psi\in\Hom_\kA(Y, X)$ be a morphism with $X\in\kA_0$ and $Y\in\kA_1$. Then this gives rise to an exact sequence
\begin{align*}
0\longrightarrow N \longrightarrow Y \stackrel{\psi}\longrightarrow X \longrightarrow L \longrightarrow 0
\end{align*}
with $N\in\kA_0$ and $L\in\kA_1$.
\end{enumerate}
\end{lemma}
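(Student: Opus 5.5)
The plan is to treat all three parts with one mechanism. Every object of $\kA$ decomposes as $\bigoplus_j Z_j$ with $Z_j\in\kA_j$. A nonzero summand $Z_j$ can only occur in a subobject, quotient, kernel, cokernel or extension if the relevant $\Hom$ or $\Ext^1$ group is allowed to be nonzero by \eqref{eq:orthogonal}. We therefore pin down the possible indices $j$ one summand at a time, using the mono/epi structure of the maps involved.

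\emph{Part (1).} Write $Z=\bigoplus_j Z_j$.
\begin{itemize}[leftmargin=*]
\item Since $Y\to Z\to X$ is exact, a summand $Z_j\neq 0$ with $j\notin\{0,1\}$ has nonzero composite $Z_j\to Z\to X$ or factors through $Y\to Z$. Precisely: if $Z_j\to X$ is zero, then $Z_j$ maps into $Y$, so the split inclusion $Z_j\hookrightarrow Z$ factors through $Y$ and makes $Z_j$ a direct summand-type subobject of $Y$.
\item For this to be nonzero we need $\Hom_\kA(Z_j,Y)\neq 0$ or $\Hom_\kA(Z_j,X)\neq0$, forcing $j\in\{-1,0\}\cup\{0,1\}$.
\item Dually, using the projections $Z\to Z_j$ and the maps $Y\to Z_j$, we need $\Hom_\kA(Y,Z_j)\ne0$ or $Z_j$ to be a quotient of $X$. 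This forces $j\in\{0,1\}\cup\{1,2\}$.
\item Intersecting the two sets gives $j\in\{0,1\}$.
\end{itemize}
The one delicate point is making "$Z_j$ injects into $Y$" precise. For this I would use that $Z_j$ is a direct summand of $Z$, so $\Hom(Z_j,-)$ and $\Hom(-,Z_j)$ applied to the sequence detect it.

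\emph{Part (2).} Let $M=\mathsf{Im}(\psi)$, $N=\ker\psi\subseteq Y$ and $L=\cok\psi$, a quotient of $X$.
\begin{itemize}[leftmargin=*]
\item A summand $N_j\neq0$ of $N$ injects into $Y\in\kA_0$, so $\Hom(N_j,Y)\neq0$ and $j\in\{-1,0\}$.
\item A summand $L_j\ne0$ of $L$ receives a surjection from $X\in\kA_0$, so $j\in\{0,1\}$.
\end{itemize}
This is exactly the shape $N_{-1}\oplus N_0$, $L_0\oplus L_1$ claimed.

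\emph{Part (3).} Here $Y\in\kA_1$ and $X\in\kA_0$, and I apply the same bookkeeping.
\begin{itemize}[leftmargin=*]
\item Summands of $N\subseteq Y$ lie in $\kA_0\vee\kA_1$.
\item Summands of the quotient $L$ of $X$ lie in $\kA_0\vee\kA_1$.
\item The image $M$ is simultaneously a quotient of $Y\in\kA_1$, giving indices in $\{1,2\}$, and a subobject of $X\in\kA_0$, giving indices in $\{-1,0\}$.
\end{itemize}
Hence $M=0$; equivalently, $\Hom_\kA(\kA_1,\kA_0)=0$ by \eqref{eq:orthogonal}, so $\psi=0$. The exact sequence is then $0\to Y\to Y\xrightarrow{0}X\to X\to0$ with $N=Y$ and $L=X$.

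\textbf{Main obstacle.} Matching this with the stated memberships $N\in\kA_0$ and $L\in\kA_1$ is the step I expect to be the real difficulty. Under \eqref{eq:orthogonal}, the only consistent outcome is the degenerate one $N=Y$, $L=X$ coming from $\psi=0$, and this contradicts the stated memberships unless $Y=0=X$. So before finalizing I would recheck the indexing convention for $\kA_i$ versus the shift in $F(X)\in\kB[i]$. In particular I would check whether the intended direction is $\psi\colon X\to Y$ with $X\in\kA_0$, $Y\in\kA_1$. In that reading the identical argument gives a kernel $N\subseteq X$ in $\kA_{-1}\vee\kA_0$ and a cokernel $L$ of $Y$ in $\kA_1\vee\kA_2$. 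Combining this with the $\Ext$-vanishing in \eqref{eq:orthogonal} along the two short exact sequences through the image then cuts these down to $N\in\kA_0$ and $L\in\kA_1$.
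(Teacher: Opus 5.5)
Your treatment of (1) and (2) is correct, and it is essentially what the paper means by ``directly follow from \eqref{eq:orthogonal}''. A nonzero summand of the middle term, kernel or cokernel must admit a nonzero (mono or epi) map to or from one of the given objects, and \eqref{eq:orthogonal} then restricts its index. Your diagnosis of (3) is also right. As printed, $\psi\colon Y\to X$ goes from $\kA_1$ to $\kA_0$, so $\psi=0$ and the conclusion fails unless $X=Y=0$. The paper's own proof uses $F(X)\in\kB[1]$ and treats $N_{-1}\in\kA_{-1}$ as a subobject of $Y$. Together with its uses in the proof of Proposition~\ref{prop:algebraHomomorphism}, this shows the intended hypotheses are $Y\in\kA_0$, $X\in\kA_1$. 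Your reading, $\psi\colon X\to Y$ with $X\in\kA_0$, $Y\in\kA_1$, is the same statement with the letters swapped.

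The gap is that for this intended version, which is the only nontrivial content of (3), you offer a single sentence, and the mechanism it names does not suffice. In your notation, let $M$ be the image, $N=N_{-1}\oplus N_0$ the kernel and $L=L_1\oplus L_2$ the cokernel. Index bookkeeping only gives $M\in\kA_0\vee\kA_1$, and \eqref{eq:orthogonal} permits both $\Ext^1_\kA(\kA_0,\kA_{-1})\neq0$ and $\Ext^1_\kA(\kA_2,\kA_1)\neq0$. So nothing you state prevents $0\to N\to X\to M\to 0$ from being non-split along $N_{-1}$, or $0\to M\to Y\to L\to 0$ from being non-split along $L_2$. The missing idea is to show that $N_{-1}$ is a direct summand of $X$ and $L_2$ a direct summand of $Y$, which forces both to vanish.

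The paper gets this from the triangle $Y\to N[1]\oplus L\to X[1]\to Y[1]$ (in your notation), the vanishing $\Hom_{\kD^b(\kA)}(Y,N_{-1}[1])=\Ext^1_\kA(Y,N_{-1})=0$, and the Peng--Xiao splitting lemma. Within your framework the same works once you use heredity:
\begin{itemize}
\item Applying $\Hom_\kA(-,N_{-1})$ to $0\to M\to Y\to L\to 0$ gives a surjection $\Ext^1_\kA(Y,N_{-1})\to\Ext^1_\kA(M,N_{-1})$, since $\Ext^2_\kA=0$. The source is zero because $Y\in\kA_1$ and $N_{-1}\in\kA_{-1}$. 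Hence the projection $N\to N_{-1}$ extends over $X$, so $N_{-1}$ is a summand of $X\in\kA_0$ and vanishes.
\item Dually, applying $\Hom_\kA(L_2,-)$ to $0\to N\to X\to M\to 0$ shows that $\Ext^1_\kA(L_2,M)$ is a quotient of $\Ext^1_\kA(L_2,X)=0$. So $L_2\hookrightarrow L$ lifts to $Y$, and $L_2$ is a summand of $Y\in\kA_1$, hence zero.
\end{itemize}
A shorter route: $F(\cone\psi)$ sits in a triangle $F(Y)\to F(\cone\psi)\to F(X)[1]\to F(Y)[1]$ whose outer terms lie in the extension-closed subcategory $\kB[1]$. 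Hence $F(N)[1]\oplus F(L)\in\kB[1]$, i.e.\ $N\in\kA_0$ and $L\in\kA_1$.
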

\begin{proof}
Parts (1) and (2) directly follow from \eqref{eq:orthogonal}. In (3), the same argument yields $N=N_{-1}\oplus N_0$ for some $N_{-1}\in\kA_{-1}$ and $N_0\in\kA_0$. Since $\kA$ is hereditary, the cone of the map $Y\longrightarrow X$ is isomorphic to $N[1]\oplus L$. Hence we obtain a distinguished triangle
\begin{align*}
X \longrightarrow N_{-1}[1]\oplus N_0[1]\oplus L \longrightarrow Y[1] \longrightarrow X[1]
\end{align*}
in $\kD^b(\kA)$. From
\begin{align*}
\Hom_{\kD^b(\kA)}(X, N_{-1}[1])
\cong  
\Hom_{\kD^b(\kB)}(\underbrace{F(X)}_{\in\kB[1]}, \underbrace{F(N_{-1})[1]}_{\in\kB}) = 0
\end{align*}
one can deduce that $N_{-1}$ is a direct summand of $Y$ (see \cite[Lemma 2.5]{PengXiao_RootCategoriesAndSimpleLieAlgebras}). This shows that $N_{-1}=0$ and hence $N=N_0\in\kA_0$. A similar argument shows that $L\in\kA_1$.
\end{proof}

\section{Cramer's isomorphism}

In the following, we give a proof of Cramer's result about the homomorphism between Drinfeld doubles induced by a derived equivalence \cite[Proposition 5]{Cramer_DoubleHallAlgebras}. We will focus on those parts of the proof that need to be revised in light of the corrected relation \eqref{eq:relationDrinfeld} (replacing \cite[(5.10/5.11)]{Cramer_DoubleHallAlgebras}). We point out that the overall strategy of Cramer's proof is preserved.  

\begin{proposition}\label{prop:algebraHomomorphism}
Let $\kA,\kB$ be hereditary categories as before. Suppose we have an equivalence of triangulated categories $F:\kD^b(\kA)\longrightarrow \kD^b(\kB)$ such that the conditions from Proposition \ref{prop:reductionOfRelations} are satisfied. We define
\begin{align*}
F_*(1 \otimes [X]) &= 
\begin{cases}
\langle B_X, B_X \rangle^i (1 \otimes [B_X] k_{B_X}^i), & i \text{ even}, \\
\langle B_X, B_X \rangle^i ([B_X] k_{B_X}^i \otimes 1), & i \text{ odd},
\end{cases}\\
F_*([X] \otimes 1) &=
\begin{cases}
\langle B_X, B_X \rangle^i ([B_X] k_{B_X}^i \otimes 1), & i \text{ even}, \\
\langle B_X, B_X \rangle^i (1 \otimes [B_X] k_{B_X}^i), & i \text{ odd},
\end{cases}\\
F_*(k_X \otimes 1) &=
\begin{cases}
k_{B_X} \otimes 1, & i \text{ even}, \\
1 \otimes k_{B_X}, & i \text{ odd},
\end{cases}
\end{align*}
where $X \in \kA_i$ and $B_X = F(X)[-i]\in\kB_i\subseteq\kB$. Then this extends to an algebra homomorphism $F_*:\sD\sH_\kA\longrightarrow \sD\sH_\kB$.
\end{proposition}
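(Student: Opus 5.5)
Since $\sD\sH_\kA$ is presented by the generators $\sH_\kA\otimes\sH_\kA$ and the relations \eqref{eq:relationLeftRight}--\eqref{eq:relationDrinfeld} (Proposition \ref{P:RelationsDouble}), I first extend $F_*$ to the generators. The decomposition $X=X_i\oplus\dots\oplus X_{i+n}$ together with \eqref{eq:multiplicativeDecomposition} expresses $[X]\otimes 1$ and $1\otimes[X]$ as scalar multiples of ordered products of the $[X_l]\otimes 1$, resp.\ $1\otimes[X_l]$. I therefore define $F_*$ on these products multiplicatively. For the Cartan part, $F_*$ is defined on $k_\alpha$ through $\sK_0(\kA)\cong\sK_0(\kD^b(\kA))\to\sK_0(\kB)$, with sign $\varepsilon$, which is consistent with \eqref{eq:relationK0reduced}. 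It then remains to show that all relations are sent to valid identities in $\sD\sH_\kB$. By Proposition \ref{prop:reductionOfRelations}, it suffices to check them for $X\in\kA_i$ and $Y\in\kA_j$. The relations \eqref{eq:relationK0right}--\eqref{eq:relationK0reduced} are immediate. The swap relations \eqref{eq:relationK0swapRight} and \eqref{eq:relationK0swapLeft} follow because $F$ preserves the Euler form, $\langle FX,FY\rangle=\langle X,Y\rangle$, and $\overline{B_X}=\varepsilon(i)F(\overline X)$, so the characters $(\alpha,Y)$ transform correctly.

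For the Hall product relations with $X\in\kA_i$ and $Y\in\kA_j$, \eqref{eq:orthogonal} leaves only the cases $j\in\{i-1,i,i+1\}$. If $j=i$, then $F[-i]$ restricts to an exact equivalence between the extension-closed subcategories $\kA_i$ and $\kB_i$ (Lemma \ref{lemma:locatingExtensionsKernelsCokernels}(1)), so the Hall numbers agree. The twist $k_{B}^i$ and the factor $\langle B,B\rangle^i$ must then be checked against $\langle X,Y\rangle$, which is a routine Euler-form computation. If $j=i\pm1$, then one ordering of the product is a direct sum, by \eqref{eq:multiplicativeDecomposition}. The other ordering involves $\Ext^1_\kA(X,Y)\cong\Hom_{\kD}(X,Y[1])$, which becomes a Hom between $\kB_i$ and $\kB_{j}$. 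In that case the Hall product in $\kA$ corresponds, after $F$, to a relation of type \eqref{eq:relationDrinfeld} in $\kB$ between a $+$ and a $-$ generator. I treat this by translating $P^Z_{X,Y}/a_Z=c^{Z[1]}_{X,Y[1]}$ using Lemma \ref{lemma:cEqualP}, which is transported verbatim by $F$ since $c$ is defined purely triangulated-categorically. Combined with Remark \ref{remark:SimplifiedRelationHomVanishing}, this matches the image.

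The heart of the argument, and the step I expect to be hardest, is the Drinfeld relation \eqref{eq:relationDrinfeld} for $X\in\kA_i$ and $Y\in\kA_j$. I split it into the same three cases. When $j\notin\{i-1,i,i+1\}$, both sides collapse by \eqref{eq:cWithHomVanishing} and the image is a trivial commutation. When $j=i$ and $i$ is even, $F[-i]$ sends $\ker$ and $\cok$ to $\ker$ and $\cok$, by Lemma \ref{lemma:locatingExtensionsKernelsCokernels}(2), except for the pieces $N_{-1}$ and $L_1$ that leave $\kA_i$. The key point is that $c^{L[1]\oplus N}_{X,Y}$ is invariant under $F$, because it counts morphisms with a prescribed cone and $F(L[1]\oplus N)$ is again a sum of shifted objects of $\kB$. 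Unlike Cramer, I therefore do not need to count triangles; the corrected relation involves only $c$ and automorphism orders, and all of these are $F$-invariant. Summands with $L,N$ in different $\kA_l$ must be regrouped using \eqref{eq:multiplicativeDecomposition}, and the image of such a summand mixes $+$ and $-$ parts. When $i$ is odd the roles of $\pm$ swap, and the image of \eqref{eq:relationDrinfeld} becomes the relation $\mathrm{R}$ with arguments exchanged, i.e.\ \eqref{eq:relationDrinfeld} for $(Y,X)$ in $\kB$. When $j=i\pm1$, Lemma \ref{lemma:locatingExtensionsKernelsCokernels}(3) places $N$ and $L$ in $\kA_i$ and $\kA_{i+1}$. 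The image of a $\kA$-morphism then becomes a $\kB$-extension, and the Drinfeld relation in $\kA$ corresponds to Hall product relations \eqref{eq:relationHallProductRight}/\eqref{eq:relationHallProductLeft} in $\kB$ multiplied by $k$-factors. In every case the remaining work is bookkeeping of the Euler form prefactors $\langle\,\cdot,\cdot\,\rangle$ and the $k$-exponents, which I would verify using $\langle B_X,B_Y\rangle=\langle X,Y\rangle^{\varepsilon(i-j)}$-type identities.
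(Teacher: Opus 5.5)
Your outline follows the paper's route: the presentation of Proposition \ref{P:RelationsDouble}, the reduction via Proposition \ref{prop:reductionOfRelations}, the $F$-invariant numbers $c$ in place of triangle counts, Lemma \ref{lemma:cEqualP} for $|i-j|=1$, and regrouping via Lemma \ref{lemma:locatingExtensionsKernelsCokernels}(2) for $i=j$. However, your case $i=j$ rests on a false claim: that the corrected relation ``involves only $c$ and automorphism orders, and all of these are $F$-invariant''. This is exactly where the content of that case lies.

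Write $N=N_{-1}\oplus N_0$ and $L=L_0\oplus L_1$. The $(L,N)$-summand of $\mathsf{LHS}_{(X,Y)}$ must be matched with the summand of $\mathsf{LHS}_{(B_X,B_Y)}$ indexed by the \emph{regrouped} objects $L'=B_{L_0}\oplus B_{N_{-1}}$ and $N'=B_{N_0}\oplus B_{L_1}$. It is true that $c_{Y,X}^{N[1]\oplus L}=c_{B_Y,B_X}^{N'[1]\oplus L'}$, $a_X=a_{B_X}$ and $a_Y=a_{B_Y}$. But in general $a_La_N\neq a_{L'}a_{N'}$. The off-diagonal blocks of $\Aut_\kA(N)$ and $\Aut_\kA(L)$ are $\Hom_\kA(N_{-1},N_0)$ and $\Hom_\kA(L_0,L_1)$. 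Those of $\Aut_\kB(N')$ and $\Aut_\kB(L')$ are instead $\Ext^1_\kA(L_1,N_0)$ and $\Ext^1_\kA(L_0,N_{-1})$. For instance, if $N=0$ the ratio $a_La_N/(a_{L'}a_{N'})$ equals $|\Hom_\kA(L_0,L_1)|$, which need not be $1$.

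The missing ingredient is the identity \eqref{eq:autNautL}. By \eqref{eq:orthogonal}, each of these four groups has order equal to a squared Euler form, so
\[
\frac{a_La_N}{a_{L'}a_{N'}}=\langle N_{-1},N_0\rangle^2\langle L_0,L_1\rangle^2\langle L_1,N_0\rangle^2\langle L_0,N_{-1}\rangle^2 .
\]
This is exactly the inverse of the factor that remains when you rewrite $\frac{\langle N,L\rangle}{\langle L,L\rangle}F_*([L]\otimes[N]k_{L-X})$ as a multiple of $\frac{\langle L',L'\rangle}{\langle N',L'\rangle}([L']\otimes[N']k_{L'-B_X})$, using \eqref{eq:multiplicativeDecomposition} and the $k$-commutation relations. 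If you carried out your ``bookkeeping of the Euler form prefactors'' believing the automorphism orders match, this factor would be left over and the relation would not close.

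The same issue appears in both cases with $|i-j|=1$. There the conversion through Lemma \ref{lemma:cEqualP} requires $|\Aut_\kA(Z_0\oplus Z_1)|=a_{Z_0}a_{Z_1}|\Hom_\kA(Z_0,Z_1)|$, respectively $|\Aut_\kB(B_N\oplus B_L)|=a_La_N|\Ext^1_\kA(L,N)|$. These are again squared Euler forms, and they must be absorbed by the prefactors.

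Two additions would turn your outline into the paper's proof. First, include these automorphism-order comparisons. Second, check, via Lemma \ref{lemma:locatingExtensionsKernelsCokernels}(2) applied in $\kB$, that $(L,N)\mapsto(L',N')$ is a bijection onto the index set of non-zero summands on the $\kB$-side.
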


The proof of Proposition \ref{prop:algebraHomomorphism} is the subject of the remainder of this article. In order to simplify the subsequent computations, we start with the following auxiliary lemma.

\begin{lemma}\label{lemma:shiftFunctor}
Proposition \ref{prop:algebraHomomorphism} holds true for the shift functor
\begin{align*}
F=[\,i\,]:~\kD^b(\kA)\longrightarrow\kD^b(\kA)
\end{align*}
for all $i\in\ZZ$.
\end{lemma}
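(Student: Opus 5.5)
For $F=[\,i\,]$ and every $X\in\kA$ we have $X\in\kA_i$ and $B_X=X$. The map $F_*$ is therefore given uniformly on all of $\kA$, and it only depends on the parity of $i$.

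Write $\varepsilon=(-1)^i$. When $i$ is even, $F_*$ sends
\begin{align*}
[X]^\pm \longmapsto \langle X,X\rangle^i [X]^\pm k_X^i, \qquad k_\alpha\longmapsto k_\alpha .
\end{align*}
When $i$ is odd, $F_*$ additionally swaps the two tensor factors.

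The first reduction is to note that $F_*$ for $[\,i\,]$ is the $i$-th power of $F_*$ for $[1]$, up to checking compositions on generators. Composing two such maps multiplies by $\langle X,X\rangle^{\varepsilon_1 i_1}$-type factors and $k$-powers. Tracking these exponents shows that the composite matches the formula for $[i_1+i_2]$, using that $k_X\otimes 1=1\otimes k_{-X}$ in $\sD\sH_\kA$ (relation \eqref{eq:relationK0reduced}). So it suffices to treat the two generators $i=1$ and $i=-1$. It is cleaner still to factor each of these as the swap $\tau:[X]\otimes 1\leftrightarrow 1\otimes[X]$ (with $k_\alpha\otimes 1\mapsto 1\otimes k_\alpha$) composed with the twist $\theta_{\pm}:[X]^\pm\mapsto \langle X,X\rangle^{\pm1}[X]^\pm k_X^{\pm1}$.

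Next I verify that the twist $\theta$ respects the relations of Proposition \ref{P:RelationsDouble}.
\begin{itemize}
\item Relations \eqref{eq:relationK0right}--\eqref{eq:relationK0swapLeft} are immediate.
\item For the Hall product relations \eqref{eq:relationHallProductRight}/\eqref{eq:relationHallProductLeft}, I compute
\begin{align*}
\theta([X])\theta([Y])=\langle X,X\rangle\langle Y,Y\rangle(X,Y)\,[X][Y]\,k_{X+Y}.
\end{align*}
Since $\langle Z,Z\rangle=\langle X,X\rangle\langle Y,Y\rangle(X,Y)$ whenever $\overline Z=\overline X+\overline Y$, this equals the image of $[X][Y]$. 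Hence $\theta$ restricted to each half is a Hopf-type twist of $\sH_\kA$.
\item For \eqref{eq:relationDrinfeld}, I substitute into both sides. The $k$-factors move to the right via \eqref{eq:relationK0swapRight}/\eqref{eq:relationK0swapLeft}. Since $L-N=X-Y$ in $\sK_0$ on each side, the resulting scalar factors are the same on both sides, up to a common prefactor depending only on $X$ and $Y$.
\end{itemize}

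Then I verify that the swap $\tau$ (for odd $i$) respects the relations. Relations \eqref{eq:relationK0right}--\eqref{eq:relationHallProductLeft} are permuted among themselves, although \eqref{eq:relationLeftRight} must be re-expressed via \eqref{eq:relationDrinfeld}. The essential check is that $\tau$ maps \eqref{eq:relationDrinfeld} for the pair $(X,Y)$ to \eqref{eq:relationDrinfeld} for $(Y,X)$, read with the two sides exchanged. For this I use that the coefficients are symmetric under $(X,Y,L,N)\mapsto(Y,X,N,L)$: indeed $c_{Y,X}^{N[1]\oplus L}$ on the left corresponds to $c_{X,Y}^{L[1]\oplus N}$ on the right. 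I also combine $\tau$ with the twist $k_X^{\pm1}$, so that the $k_{L-X}$ versus $k_{X-L}$ placements match after applying \eqref{eq:relationK0reduced}.

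I expect this last bookkeeping to be the main obstacle: reconciling the $\langle\cdot,\cdot\rangle$-prefactors and $k$-exponents in \eqref{eq:relationDrinfeld} after the swap. Everything hinges on the fact that the counts $c_{X,Y}^{L[1]\oplus N}$, given explicitly by Lemma \ref{lemma:cEqualSum}, are intrinsic to $\kA$ and unchanged by the shift, so only scalar identities in $\langle\cdot,\cdot\rangle$ remain to be checked.
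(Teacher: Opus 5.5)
Your argument is correct. Its two core computations are the paper's, but you organize them differently.

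The paper does not reduce to $i=\pm1$ and introduces no auxiliary maps. It treats an arbitrary $i$ by parity and computes directly:
\begin{itemize}
\item for even $i$, $F_*(\mathsf{LHS}_{(X,Y)})=\langle Y-X,Y-X\rangle^i\,\mathsf{LHS}_{(X,Y)}\,(1\otimes k_{Y-X}^i)$, and the same for $\mathsf{RHS}_{(X,Y)}$;
\item for odd $i$, the image of $\mathsf{LHS}_{(X,Y)}$ is the same scalar times $\mathsf{RHS}_{(Y,X)}\,(k_{Y-X}^i\otimes 1)$, and symmetrically for $\mathsf{RHS}_{(X,Y)}$.
\end{itemize}
Your factorization into the swap $\tau$ and the twist $\theta$ separates exactly these two effects. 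The $\theta$-check is the paper's even case, resting on $\langle L,L\rangle\langle N,N\rangle/(L,N)=\langle N-L,N-L\rangle$ together with $N-L=Y-X$. The $\tau$-check is the symmetry $(X,Y,L,N)\mapsto(Y,X,N,L)$ used in the odd case.

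What your route buys is the structural fact that $\tau$ and $\theta$ are separately automorphisms of $\sD\sH_\kA$. What it costs is the composition bookkeeping, and you can shorten that. Since $\tau$ and $\theta$ commute and $\theta$ fixes every $k_\alpha$, a one-line check on generators gives $F_*=\theta^i$ for even $i$ and $F_*=\tau\circ\theta^i$ for odd $i$. So you never need to compare $F_*$ for $[\,i_1+i_2\,]$ with composites.

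Three details should be adjusted.
\begin{itemize}
\item The swap needs no accompanying $k$-twist. $\tau$ sends the summand $[L]\otimes[N]k_{L-X}$ to $(1\otimes[L])([N]\otimes k_{X-L})$, and $X-L=Y-N$. Hence $\tau(\mathsf{LHS}_{(X,Y)})=\mathsf{RHS}_{(Y,X)}$ and $\tau(\mathsf{RHS}_{(X,Y)})=\mathsf{LHS}_{(Y,X)}$ exactly.
\item Relation \eqref{eq:relationLeftRight} is not re-expressed via \eqref{eq:relationDrinfeld}. It simply defines $\tau([X]\otimes[Y])$ as $\tau([X]\otimes 1)\,\tau(1\otimes[Y])=(1\otimes[X])([Y]\otimes 1)$, which in general differs from $[Y]\otimes[X]$.
\item $\theta$ is an algebra automorphism of each half but not a coalgebra map, so calling it a ``Hopf-type twist'' overstates it. Only the algebra property is used, so this does not affect the argument.
\end{itemize}
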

\begin{proof}
We observe that $\kA_i=\kA$ and $B_X=X$ for all objects $X\in\kA$ in the notation of Proposition \ref{prop:algebraHomomorphism}. The relations \eqref{eq:relationLeftRight}--\eqref{eq:relationHallProductLeft} are easy to verify. We therefore focus on relation \eqref{eq:relationDrinfeld} and distinguish two cases:
\\ \underline{Case 1 ($i$ even):}
If we apply $F_*$ to the left-hand side of \eqref{eq:relationDrinfeld} we get
\begin{align*}
&F_*(\mathsf{LHS}_{(X,Y)})
=\frac{\langle X, X \rangle}{\langle Y, X \rangle}\sum_{[L], [N]} \frac{\langle N, L \rangle}{\langle L, L \rangle} \frac{a_L a_N}{a_Xa_Y} c_{Y, X}^{N[1] \oplus L}  
\langle L,L \rangle^i \langle N,N \rangle^i ([L]k_L^i \otimes [N]k_N^ik_{L-X}) \\
&\quad=\frac{\langle X, X \rangle}{\langle Y, X \rangle}\sum_{[L], [N]} \frac{\langle N, L \rangle}{\langle L, L \rangle} \frac{a_L a_N}{a_Xa_Y} c_{Y, X}^{N[1] \oplus L}  
\underbrace{\frac{\langle L,L \rangle^i \langle N,N \rangle^i}{(L,N)^i}}_{=\langle N-L,N-L\rangle^i} ([L] \otimes [N]k_{N-L}^ik_{L-X}) \\
&\quad=\langle Y-X,Y-X\rangle^i \cdot\frac{\langle X, X \rangle}{\langle Y, X \rangle}\sum_{[L], [N]} \frac{\langle N, L \rangle}{\langle L, L \rangle} \frac{a_L a_N}{a_Xa_Y} c_{Y, X}^{N[1] \oplus L}  
([L] \otimes [N]k_{L-X})(1\otimes k_{Y-X}^i) \\
&\quad=\langle Y-X,Y-X\rangle^i \cdot \mathsf{LHS}_{(X,Y)} \cdot (1\otimes k_{Y-X}^i)
\end{align*}
where we used the identity $N-L=Y-X$ in $\sK_0(\kA)$.
A similar computation shows that $F_*(\mathsf{RHS}_{(X,Y)}) = \langle Y-X,Y-X\rangle^i \cdot \mathsf{RHS}_{(X,Y)} \cdot (1\otimes k_{Y-X}^i)$ which proves that $F_*$ preserves the relation \eqref{eq:relationDrinfeld}.
\\ \underline{Case 2 ($i$ odd):}
Applying $F_*$ to the left-hand side of \eqref{eq:relationDrinfeld} yields
\begin{align*}
&F_*(\mathsf{LHS}_{(X,Y)})
=\frac{\langle X,X \rangle}{\langle Y, X \rangle}\sum_{[L], [N]} \frac{\langle N, L \rangle}{\langle L, L \rangle} \frac{a_L a_N}{a_Xa_Y} c_{Y, X}^{N[1] \oplus L} 
\langle L,L\rangle^i \langle N,N\rangle^i (1\otimes [L]k_L^i)([N]k_N^i\otimes k_{X-L})\\[+5pt]
&\quad=\frac{\langle X,X \rangle}{\langle Y, X \rangle}\sum_{[L], [N]} \frac{\langle N, L \rangle}{\langle L, L \rangle} \frac{a_L a_N}{a_Xa_Y} c_{Y, X}^{N[1] \oplus L} 
\underbrace{\frac{\langle L,L\rangle^i \langle N,N\rangle^i}{(L,N)^i}}_{=\langle N-L,N-L\rangle^i}
(1\otimes [L])([N]k_{N-L}^i\otimes \underbrace{k_{X-L}}_{=k_{Y-N}})\\[+5pt]
&\quad=\langle Y-X,Y-X\rangle^i \cdot\frac{\langle X, X \rangle}{\langle Y, X \rangle}\sum_{[L], [N]} \frac{\langle N, L \rangle}{\langle L, L \rangle} \frac{a_L a_N}{a_Xa_Y} c_{Y, X}^{N[1] \oplus L}
(1\otimes [L])([N]\otimes k_{Y-N})(k_{Y-X}^i\otimes 1)\\[+5pt]
&\quad=\langle Y-X,Y-X\rangle^i \cdot \mathsf{RHS}_{(Y,X)} \cdot(k_{Y-X}^i \otimes 1).
\end{align*}
Here we identified the middle part of the expression with the right-hand side of relation \eqref{eq:relationDrinfeld}, but with $X$ and $Y$ interchanged. Analogously, one can compute that
\begin{align*}
F_*(\mathsf{RHS}_{(X,Y)}) = \langle Y-X,Y-X\rangle^i \cdot \mathsf{LHS}_{(Y,X)} \cdot(k_{Y-X}^i \otimes 1)
\end{align*}
which shows $F_*(\mathsf{LHS}_{(X,Y)})=F_*(\mathsf{RHS}_{(X,Y)})$ and finishes the proof.
\end{proof}

\noindent
In order to prove Proposition \ref{prop:algebraHomomorphism} we need to show that the function $F_*$ respects the relations \eqref{eq:relationLeftRight}--\eqref{eq:relationDrinfeld} for all $X\in\kA_i$, $Y\in\kA_j$. We observe:

\begin{itemize}
\item The relations \eqref{eq:relationLeftRight}--\eqref{eq:relationK0reduced} are clearly preserved by $F_*$.
\item The relations \eqref{eq:relationK0swapRight} and \eqref{eq:relationK0swapLeft} were established by Cramer \cite{Cramer_DoubleHallAlgebras}.
\item The relation \eqref{eq:relationHallProductRight} is covered by \cite{Cramer_DoubleHallAlgebras} in the cases $|i-j|\geq 2$, $j=i+1$ and $i=j$.
\item The relation \eqref{eq:relationDrinfeld} is clearly preserved for $|i-j|\geq 2$.
\end{itemize}

\noindent
Therefore it remains to show:
\begin{itemize}
\item The relation \eqref{eq:relationHallProductRight} is preserved in the case $j=i-1$ (note that \eqref{eq:relationHallProductLeft} follows from an analogous computation).
\item The relation \eqref{eq:relationDrinfeld} is preserved for $i=j$.
\item The relation \eqref{eq:relationDrinfeld} is preserved for $|i-j|= 1$.
\end{itemize}

\subsection*{Relation \eqref{eq:relationHallProductRight} for $j=i-1$}
\phantom{.}\\
By Lemma \ref{lemma:shiftFunctor} we can assume without loss of generality that $Y\in\kA_0$ and $X\in\kA_1$. In Lemma \ref{lemma:locatingExtensionsKernelsCokernels} (1) we have already seen that the middle term of a short exact sequence $0\rightarrow Y\rightarrow Z\rightarrow X\rightarrow 0$ is contained in $\kA_0\vee\kA_1$, hence the right-hand side of \eqref{eq:relationHallProductRight} can be rewritten as
\begin{align*}
\mathsf{RHS}=\langle X,Y \rangle \sum_{[Z]} \frac{P_{X,Y}^Z}{a_Xa_Y} (1 \otimes [Z])
&= \langle X,Y \rangle \sum_{[Z_0],[Z_1]} \frac{P_{X,Y}^{Z_0\oplus Z_1}}{a_Xa_Y} (1 \otimes [Z_0\oplus Z_1])\\
&= \langle X,Y \rangle \sum_{[Z_0],[Z_1]} \frac{P_{X,Y}^{Z_0\oplus Z_1}}{a_Xa_Y}\frac{1}{\langle Z_0,Z_1\rangle} (1 \otimes [Z_0][Z_1])
\end{align*}
where the sum runs over all isomorphism classes of objects $Z_0\in\kA_0$ and $Z_1\in\kA_1$.
Let us write $B_Y=F(Y)$, $N=F(Z_0)$, $L=F(Z_1)[-1]$ and $B_X=F(X)[-1]\in\kB$.
If we apply $F_*$ to the expression above, we obtain
\begin{align*}
F_*(\mathsf{RHS})
&=\langle X,Y \rangle \sum_{[Z_0],[Z_1]} \frac{P_{X,Y}^{Z_0\oplus Z_1}}{a_Xa_Y}\underbrace{\frac{1}{\langle Z_0,Z_1\rangle}}_{=\langle N,L\rangle} \langle L,L\rangle (1 \otimes [N])([L]k_L\otimes 1)\\
&=\langle X,Y \rangle \sum_{[Z_0],[Z_1]} \frac{P_{X,Y}^{Z_0\oplus Z_1}}{a_Xa_Y}\langle N,L\rangle\langle L,L\rangle (1 \otimes [N])([L]\otimes k_L^{-1}).
\end{align*} 
Since $\Hom_\kA(X,Y)=0$, Lemma \ref{lemma:cEqualP} yields
\begin{align*}
c_{B_X,B_Y}^{L[1]\oplus N}
&=\left|\left\{B_X\stackrel{\psi}{\longrightarrow}B_Y~\bigg\vert~\cone(\psi)\cong L[1]\oplus N\right\}\right|
=\left|\left\{X[-1]\stackrel{\overline{\psi}}{\longrightarrow}Y~\bigg\vert~\cone(\overline{\psi})\cong Z_0\oplus Z_1\right\}\right|\\[+5pt]
&= \frac{P_{X,Y}^{Z_0\oplus Z_1}}{\big|\Aut_\kA(Z_0\oplus Z_1)\big|}
= \frac{P_{X,Y}^{Z_0\oplus Z_1}}{a_La_N \big|\Hom_\kA(Z_0, Z_1)\big|}
= \frac{P_{X,Y}^{Z_0\oplus Z_1}}{a_La_N\langle Z_0,Z_1\rangle^2}
= \frac{P_{X,Y}^{Z_0\oplus Z_1}\langle N,L\rangle^2}{a_La_N}.
\end{align*}
Further, note that the summation over all isomorphism classes of objects $Z_0\in\kA_0$ and $Z_1\in \kA_1$ is equivalent to the summation over all $N\in\kB_0$ and $L\in\kB_1$. By Lemma \ref{lemma:locatingExtensionsKernelsCokernels} (3), this is equivalent to the summation over all $N,L\in\kB$ since the number $c_{B_X,B_Y}^{L[1]\oplus N}$ vanishes in all the other cases. Using the identity ${N}-{L} = {B_Y}-{B_X}$ in $\sK_0(\kB)$, we therefore obtain
\begin{align*}
F_*(\mathsf{RHS})
&=\langle X,Y \rangle \sum_{[Z_0],[Z_1]} \frac{a_La_N}{a_Xa_Y}c_{B_X,B_Y}^{L[1]\oplus N}\frac{\langle N,L\rangle\langle L,L\rangle}{\langle N,L\rangle^2} (1 \otimes [N])([L]\otimes k_L^{-1})\\[+5pt]
&=\langle X,Y \rangle \sum_{[Z_0],[Z_1]} \frac{a_La_N}{a_Xa_Y}c_{B_X,B_Y}^{L[1]\oplus N}\frac{\langle L,L\rangle}{\langle N,L\rangle} (1 \otimes [N])([L]\otimes k_L^{-1})\\[+5pt]
&=\langle X,Y \rangle \sum_{[Z_0],[Z_1]} \frac{a_La_N}{a_Xa_Y}c_{B_X,B_Y}^{L[1]\oplus N}
\frac{\langle L,N\rangle}{\langle N,N\rangle} \underbrace{\frac{\langle N,N\rangle}{\langle L,N\rangle} \frac{\langle L,L\rangle}{\langle N,L\rangle}}_{=\langle N-L,N-L\rangle}
(1 \otimes [N])([L]\otimes k_L^{-1})\\[+5pt]
&=\frac{\langle B_Y-B_X, B_Y-B_X \rangle}{\langle B_X,B_Y \rangle} \sum_{[Z_0],[Z_1]} 
\frac{\langle L,N\rangle}{\langle N,N\rangle}\frac{a_La_N}{a_Xa_Y}c_{B_X,B_Y}^{L[1]\oplus N}
(1 \otimes [N])([L]\otimes k_L^{-1}) \\[+5pt]
&=\frac{\langle B_X,B_X\rangle}{(B_X,B_Y)}\underbrace{\frac{\langle B_Y, B_Y \rangle}{\langle B_X,B_Y \rangle} 
\sum_{[Z_0],[Z_1]} 
\frac{\langle L,N\rangle}{\langle N,N\rangle}\frac{a_La_N}{a_Xa_Y}c_{B_X,B_Y}^{L[1]\oplus N}
(1 \otimes [N])([L]\otimes k_{B_X-L})}_{\hspace{4.1cm}=[B_X]\otimes [B_Y] \text{ by (\ref{eq:relationDrinfeld}) using} \Hom_{\kB}(B_Y, B_X) = 0}(1\otimes k_{B_X}^{-1})\\[+5pt]
&=\frac{\langle B_X,B_X\rangle}{(B_X,B_Y)}([B_X]\otimes [B_Y] k_{B_X}^{-1})
=\langle B_X,B_X\rangle([B_X]k_{B_X}\otimes [B_Y])\\[+5pt]
&=\langle B_X,B_X\rangle([B_X]k_{B_X}\otimes 1)(1\otimes [B_Y])
=F_*([B_X]\otimes 1)\cdot F_*(1\otimes [B_Y])
\end{align*}
This finishes the proof that $F_*$ preserves the relation \eqref{eq:relationHallProductRight}.

\subsection*{Relation \eqref{eq:relationDrinfeld} for $i=j$}
\phantom{.}\\
By Lemma \ref{lemma:shiftFunctor} we can assume without loss of generality that $X,Y\in\kA_0$.
The left-hand side of \eqref{eq:relationDrinfeld} reads 
\begin{align*}
\mathsf{LHS}_{(X,Y)}=\frac{\langle X, X \rangle}{\langle Y, X \rangle}\sum_{[L], [N]} \frac{\langle N, L \rangle}{\langle L, L \rangle} \frac{a_La_N}{a_Xa_Y} c_{Y, X}^{N[1] \oplus L}  ([L] \otimes [N]k_{L-X}).
\end{align*}
By Lemma \ref{lemma:locatingExtensionsKernelsCokernels} (2) any map $Y\longrightarrow X$ gives rise to an exact sequence
\begin{align*}
0\longrightarrow \underbrace{N_{-1}\oplus N_0}_{=N}\longrightarrow Y \longrightarrow X \longrightarrow \underbrace{L_0\oplus L_1}_{=L} \longrightarrow 0
\end{align*}
with $N_{-1}\in\kA_{-1}$, $N_0,L_0\in\kA_0$ and $L_1\in\kA_1$.
We write $B_{N_{-1}}=F(N_{-1})[1]$, $B_{N_0}=F(N_0)$, $B_Y=F(Y)$, $B_X=F(X)$, $B_{L_0}=F(L_0)$ and $B_{L_1}=F(L_1)[-1]\in\kB$ and observe that
\begin{align}
\nonumber
c_{Y,X}^{N[1]\oplus L} 
&= \left|\left\{Y\stackrel{\psi}{\longrightarrow}X~\bigg\vert~\cone(\psi)\cong (N_{-1}\oplus N_0)[1]\oplus (L_0\oplus L_1)\right\}\right| \\ \nonumber
&= \left|\left\{B_Y\stackrel{\overline{\psi}}{\longrightarrow}B_X~\bigg\vert~\cone(\overline{\psi})\cong B_{N_{-1}}\oplus B_{N_0}[1]\oplus B_{L_0}\oplus B_{L_1}[1]\right\}\right| \\ 
&= \left|\left\{B_Y\stackrel{\overline{\psi}}{\longrightarrow}B_X~\bigg\vert~\cone(\overline{\psi})\cong N'[1]\oplus L'\right\}\right| 
= c_{B_Y,B_X}^{N'[1]\oplus L'}
\label{eq:cEqualc}
\end{align}
where we have set
\begin{align*}
N'=B_{N_0} \oplus B_{L_1} \qquad\text{ and }\qquad L'=B_{L_0} \oplus B_{N_{-1}}.
\end{align*}
The next step is to compute the automorphism groups of the objects $N',L'\in\kB$ and compare them with those of $N,L\in\kA$. We have
\begin{align*}
\Aut_\kA(N)&=\begin{pmatrix} \Aut_\kA(N_{-1}) & 0 \\ \Hom_\kA(N_{-1},N_0) & \Aut_\kA(N_0) \end{pmatrix},\quad
\Aut_\kA(L)=\begin{pmatrix} \Aut_\kA(L_0) & 0 \\ \Hom_\kA(L_0,L_1) & \Aut_\kA(L_1) \end{pmatrix},\\[+5pt]
\Aut_\kB(N')&=\begin{pmatrix} \Aut_\kA(N_0) & \Ext^1_\kA(L_1,N_0) \\ 0 & \Aut_\kA(L_1) \end{pmatrix},\qquad
\Aut_\kB(L')=\begin{pmatrix} \Aut_\kA(L_0) & 0 \\ \Ext^1_\kA(L_0,N_{-1}) & \Aut_\kA(N_{-1}) \end{pmatrix}.
\end{align*}
Using the $\Hom$- and $\Ext$-vanishing properties \eqref{eq:orthogonal}, we obtain
\begin{align*}
\frac{a_La_N}{a_{L'}a_{N'}}&=\frac{\big|\Hom_\kA(N_{-1},N_0)\big|\cdot\big|\Hom_\kA(L_0,L_1)\big|}{\big|\Ext^1_\kA(L_1,N_0)\big|\cdot\big|\Ext^1_\kA(L_0,N_{-1})\big|}
=\frac{\langle N_{-1},N_0 \rangle^2\cdot\langle L_0,L_1 \rangle^2}{\langle L_1,N_0 \rangle^{-2}\cdot \langle L_0,N_{-1} \rangle^{-2}}\\[+5pt]
&=\langle N_{-1},N_0 \rangle^2\langle L_0,L_1 \rangle^2 \langle L_1,N_0 \rangle^2 \langle L_0,N_{-1} \rangle^2.
\end{align*}
and hence
\begin{align}
\label{eq:autNautL}
\frac{a_La_N}{a_Xa_Y}
=\langle N_{-1},N_0 \rangle^2\langle L_0,L_1 \rangle^2 \langle L_1,N_0 \rangle^2 \langle L_0,N_{-1} \rangle^2 \cdot \frac{a_{L'}a_{N'}}{a_{B_X}a_{B_Y}}.
\end{align}
We proceed with the next preparatory calculation:
\begin{align*}
\frac{\langle N,L\rangle}{\langle L,L\rangle}\cdot\frac{\langle L',L'\rangle}{\langle N',L'\rangle}
&=\frac{\langle N_0,L_0\rangle\langle N_0,L_1\rangle\langle N_{-1},L_0\rangle\overbrace{\langle N_{-1},L_1\rangle}^{=1}}{\langle L_0,L_0\rangle\langle L_0,L_1\rangle\langle L_1,L_0\rangle\langle L_1,L_1\rangle}
\cdot \\ &\qquad \cdot
\frac{\langle B_{L_0},B_{L_0}\rangle\langle B_{L_0},B_{N_{-1}}\rangle\langle B_{N_{-1}},B_{L_0}\rangle\langle B_{N_{-1}},B_{N_{-1}}\rangle}{\langle B_{N_0},B_{L_0}\rangle\langle B_{N_0},B_{N_{-1}}\rangle\langle B_{L_1},B_{L_0}\rangle\underbrace{\langle B_{L_1},B_{N_{-1}}\rangle}_{=1}}\\
&=\frac{\langle N_0,L_1\rangle\langle N_{-1},N_{-1}\rangle\langle N_0,N_{-1}\rangle}{\langle L_0,L_1\rangle\langle L_1,L_1\rangle\langle L_0,N_{-1}\rangle}
\end{align*}
If we combine this with the identities
\begin{align*}
[L] = [L_0\oplus L_1] =\frac{1}{\langle L_0,L_1\rangle}[L_0][L_1]
\quad\text{ and }\quad
[N] = [N_{-1}\oplus N_0] = \frac{1}{\langle N_{-1},N_0\rangle}[N_{-1}][N_0],
\end{align*}
we obtain
\begin{align*}
&\frac{\langle N,L\rangle}{\langle L,L\rangle}  F_*([L]\otimes[N]k_{L-X})
=\frac{\langle N,L\rangle}{\langle L,L\rangle}\frac{1}{\langle N_{-1},N_0\rangle\langle L_0,L_1\rangle}  F_*([L_0][L_1]\otimes[N_{-1}][N_0]k_{L_0+L_1-X})\\[+5pt]
&\quad =\frac{\langle L',L'\rangle}{\langle N',L'\rangle}
\frac{\langle N_0,L_1\rangle\langle N_{-1},N_{-1}\rangle\langle N_0,N_{-1}\rangle}{\langle L_0,L_1\rangle^2\langle L_1,L_1\rangle\langle L_0,N_{-1}\rangle\langle N_{-1},N_0\rangle}
  F_*([L_0][L_1]\otimes[N_{-1}][N_0]k_{L_0+L_1-X})\\[+5pt]
&\quad =\frac{\langle L',L'\rangle}{\langle N',L'\rangle}
\frac{\langle N_0,L_1\rangle\langle N_0,N_{-1}\rangle}{\langle L_0,L_1\rangle^2\langle L_0,N_{-1}\rangle\langle N_{-1},N_0\rangle}
  ([B_{L_0}]\otimes[B_{L_1}]k_{B_{L_1}})([B_{N_{-1}}]k_{B_{N_{-1}}}^{-1}\otimes[B_{N_0}]k_{B_{L_0}-B_{L_1}-B_X})\\[+5pt]
&\quad =\frac{\langle L',L'\rangle}{\langle N',L'\rangle}
\frac{\langle N_0,L_1\rangle\langle N_0,N_{-1}\rangle}{\langle L_0,L_1\rangle^2\langle L_0,N_{-1}\rangle\langle N_{-1},N_0\rangle}
  ([B_{L_0}][B_{N_{-1}}]\otimes[B_{L_1}]k_{B_{L_1}}k_{B_{N_{-1}}}[B_{N_0}]k_{B_{L_0}-B_{L_1}-B_X})
\end{align*}
\begin{align*}
&\quad =\frac{\langle L',L'\rangle}{\langle N',L'\rangle}
\frac{\langle N_0,L_1\rangle}{\langle L_0,L_1\rangle^2\langle L_0,N_{-1}\rangle\langle N_{-1},N_0\rangle^2}
  ([B_{L_0}][B_{N_{-1}}]\otimes[B_{L_1}]k_{B_{L_1}}[B_{N_0}]k_{B_{N_{-1}}}k_{B_{L_0}-B_{L_1}-B_X})\\[+5pt]
&\quad =\frac{\langle L',L'\rangle}{\langle N',L'\rangle}
\frac{1}{\langle L_0,L_1\rangle^2\langle L_0,N_{-1}\rangle\langle N_{-1},N_0\rangle^2\langle L_1,N_0\rangle}
  ([B_{L_0}][B_{N_{-1}}]\otimes[B_{L_1}][B_{N_0}]k_{B_{N_{-1}}+B_{L_0}-B_X})\\[+5pt]
&\quad =\frac{\langle L',L'\rangle}{\langle N',L'\rangle}
\frac{1}{\langle L_0,L_1\rangle^2\langle L_0,N_{-1}\rangle^2\langle N_{-1},N_0\rangle^2\langle L_1,N_0\rangle^2}
  ([B_{L_0}\oplus B_{N_{-1}}]\otimes[B_{L_1}\oplus B_{N_0}]k_{L'-B_X})\\[+5pt]
&\quad =\frac{\langle L',L'\rangle}{\langle N',L'\rangle}
\frac{1}{\langle L_0,L_1\rangle^2\langle L_0,N_{-1}\rangle^2\langle N_{-1},N_0\rangle^2\langle L_1,N_0\rangle^2}
  ([L']\otimes[N']k_{L'-B_X}).
\end{align*}
Using \eqref{eq:cEqualc} and \eqref{eq:autNautL} we arrive at
\begin{align*}
\frac{\langle N,L\rangle}{\langle L,L\rangle}\frac{a_La_N}{a_Xa_Y}c_{Y,X}^{N[1]\oplus L}
 F_*([L]\otimes[N]k_{L-X})
=\frac{\langle L',L'\rangle}{\langle N',L'\rangle}\frac{a_{L'}a_{N'}}{a_{B_X}a_{B_Y}}c_{B_Y,B_X}^{N'[1]\oplus L'}
 ([L']\otimes[N']k_{L'-B_X}).
\end{align*}
In view of Lemma \ref{lemma:locatingExtensionsKernelsCokernels} (2), taking the sum over all (isomorphism classes of) objects $N,L\in\kA$ is equivalent to taking the sum over all $N_{-1}\in\kA_{-1}$, $N_0,L_0\in\kA_0$ and $L_1\in\kA_1$. Applying $F$, this is equivalent to taking the sum over all $B_{N_{-1}}\in\kB_{-1}$, $B_{N_0},B_{L_0}\in\kB_0$ and $B_{L_1}\in\kB_1$. Using Lemma \ref{lemma:locatingExtensionsKernelsCokernels} (2) in the category $\kB$, we can deduce that every map $B_Y\longrightarrow B_X$ has kernel in $\kB_1\vee\kB_0$ and cokernel in $\kB_0\vee\kB_{-1}$. Hence this is also equivalent to summation over all (isomorphism classes of) objects $N',L'\in\kB$. We therefore obtain
\begin{align*}
&F_*\left(\frac{\langle X,X\rangle}{\langle Y,X\rangle}\sum_{[L][N]}\frac{\langle N,L\rangle}{\langle L,L\rangle}\frac{a_La_N}{a_Xa_Y}c_{Y,X}^{N[1]\oplus L}
([L]\otimes[N]k_{L-X})\right)
\\ &\hspace{3cm}
=\frac{\langle B_X,B_X\rangle}{\langle B_Y,B_X\rangle}\sum_{[L'][N']}\frac{\langle L',L'\rangle}{\langle N',L'\rangle}\frac{a_{L'}a_{N'}}{a_{B_X}a_{B_Y}}c_{B_Y,B_X}^{N'[1]\oplus L'}
  ([L']\otimes[N']k_{L'-B_X})
\end{align*}
and have proved the first equality in the following chain:
\begin{align*}
F_*(\mathsf{LHS}_{(X,Y)})
=\mathsf{LHS}_{(B_X,B_Y)}
=\mathsf{RHS}_{(B_X,B_Y)}
=F_*(\mathsf{RHS}_{(X,Y)})
\end{align*}
The second equality is the relation \eqref{eq:relationDrinfeld} in $\sD\sH_\kB$ while the third equality can be checked via a similar computation as the first one. This proves that $F_*$ preserves \eqref{eq:relationDrinfeld} in the case $X,Y\in\kA_i$.

\subsection*{Relation \eqref{eq:relationDrinfeld} for $|i-j|=1$}
\phantom{.}\\ 
We will assume that $Y\in\kA_0, X\in\kA_1$. For $X\in\kA_0, Y\in\kA_1$ the computations are similar and by Lemma \ref{lemma:shiftFunctor}, it suffices to consider these two cases.
As already pointed out in Remark \ref{remark:SimplifiedRelationHomVanishing}, the relation \eqref{eq:relationDrinfeld} takes the simpler form
\begin{align}
\label{eq:DDrelationCase2}
\frac{\langle X, X \rangle}{\langle Y, X \rangle}\sum_{[L], [N]} \frac{\langle N, L \rangle}{\langle L,L \rangle} \frac{a_L a_N}{a_Xa_Y} c_{Y, X}^{N[1] \oplus L}  ([L]\otimes [N]k_{L-X})
=(1\otimes [Y])([X] \otimes 1)
\end{align}
because $\Hom_\kA(X,Y)=0$.
By Lemma \ref{lemma:locatingExtensionsKernelsCokernels} (3), every morphism $Y\longrightarrow X$ gives rise to an exact sequence
\begin{align*}
0\longrightarrow N \longrightarrow Y \longrightarrow X \longrightarrow L\longrightarrow 0
\end{align*}
in $\kA$ with $N\in\kA_0$ and $L\in\kA_1$.
We therefore write $B_N=F(N)$, $B_Y=F(Y)$, $B_X=F(X)[-1]$ and $B_L=F(L)[-1]\in\kB$. In particular, we get $B_L+B_N=B_X+B_Y$ in $\sK_0(\kB)$. We start by computing
\begin{align*}
F_*([L]\otimes [N]k_{L-X}) &= \langle B_L,B_L\rangle(1\otimes [B_L]k_{B_L}[B_N]k_{B_X-B_L}) \\[+5pt]
&= \langle B_L,B_L\rangle (B_L,B_N) (1\otimes [B_L][B_N]k_{B_L}k_{B_X-B_L}) \\[+5pt]
&= \langle B_L,B_L\rangle \langle B_N,B_L\rangle \langle B_L,B_N\rangle^2(1\otimes [B_L\oplus B_N]k_{B_X}) \\[+5pt]
&= \frac{\langle L,L\rangle}{\langle N,L\rangle \langle L,N\rangle^2}(1\otimes [B_L\oplus B_N]k_{B_X}).\\
\end{align*}
Since $\Hom_\kB(B_Y,B_X)=0$, Lemma \ref{lemma:cEqualP} yields
\begin{align*}
c_{Y,X}^{N[1]\oplus L}
&=c_{F(Y),F(X)}^{F(N[1]\oplus L)}
=\left|\left\{B_Y\stackrel{\psi}{\longrightarrow}B_X[1]~\bigg\vert~\cone(\psi)\cong B_N[1]\oplus B_L[1]\right\}\right|\\[+5pt]
&= \dfrac{P_{B_Y,B_X}^{B_N\oplus B_L}}{\big|\Aut_{\kB}(B_N\oplus B_L)\big|}
= \dfrac{P_{B_Y,B_X}^{B_N\oplus B_L}}{\big|\Aut_{\kD^b(\kA)}(N[1]\oplus L)\big|}
= \dfrac{P_{B_Y,B_X}^{B_N\oplus B_L}}{a_La_N\big|\Ext_\kA^1(L,N)\big|}
= \dfrac{P_{B_Y,B_X}^{B_N\oplus B_L}\langle L,N\rangle^2}{a_La_N}.\\
\end{align*}
When we apply $F_*$ to the left-hand side of \eqref{eq:DDrelationCase2}, we get
\begin{align*}
&\frac{\langle X, X \rangle}{\langle Y, X \rangle}\sum_{[L], [N]} \frac{\langle N, L \rangle}{\langle L,L \rangle} \frac{a_L a_N}{a_Xa_Y} c_{Y, X}^{N[1] \oplus L}  F_*([L]\otimes [N]k_{L-X}) \\[+5pt]
&\hspace{2cm}= \frac{\langle X, X \rangle}{\langle Y, X \rangle}\sum_{[B_L], [B_N]} \frac{P_{B_Y,B_X}^{B_N\oplus B_L}}{a_Xa_Y} (1\otimes [B_L\oplus B_N]k_{B_X}) \\[+5pt]
&\hspace{2cm}= \langle B_X, B_X \rangle \underbrace{\langle B_Y, B_X \rangle \sum_{[B]} \frac{P_{B_Y,B_X}^{B}}{a_{B_X}a_{B_Y}} (1\otimes [B])}_{= 1\otimes [B_Y][B_X]}(1\otimes k_{B_X}) \\[+5pt]
&\hspace{2cm}= (1\otimes[B_Y]) \cdot \langle B_X,B_X\rangle(1\otimes [B_X] k_{B_X}) \\[+5pt] 
&\hspace{2cm}
=F_*(1\otimes [Y])\cdot F_*([X]\otimes 1),
\end{align*}
which coincides with the image of the right-hand side of \eqref{eq:DDrelationCase2} under $F_*$. We have used that -- by Lemma \ref{lemma:locatingExtensionsKernelsCokernels} (3) -- the summation over all (isomorphism classes of) objects $L,N\in\kA$ is equivalent to the summation over all $N\in\kA_0$ and $L\in\kA_1$ which is equivalent to the summation over all $B_N\in\kB_0$ and $B_L\in\kB_1$. By Lemma \ref{lemma:locatingExtensionsKernelsCokernels} (1), this in turn is equivalent to the summation over all (isomorphism classes of) objects $B\in \kB$. This finishes the proof that $F_*$ preserves the relation \eqref{eq:relationDrinfeld} in this case.\\
Moreover, this completes the proof of Proposition \ref{prop:algebraHomomorphism}.\\

\noindent
As an immediate consequence we obtain the main result of Cramer's work \cite{Cramer_DoubleHallAlgebras}.

\begin{theorem}\label{thm:CramerIsomorphism}
Let $\kA$ and $\kB$ be hereditary categories and $F: \kD^b(\kA)\longrightarrow \kD^b(\kB)$ a derived equivalence such that the conditions of Proposition \ref{prop:reductionOfRelations} are satisfied for both $\kA$ and $\kB$. Then $F$ induces an isomorphism of $\widetilde\QQ$-algebras $\sD\sH_\kA\longrightarrow \sD\sH_\kB$.
\end{theorem}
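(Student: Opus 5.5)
The plan is to deduce Theorem \ref{thm:CramerIsomorphism} from Proposition \ref{prop:algebraHomomorphism} applied to $F$ and to a quasi-inverse $G:\kD^b(\kB)\lar\kD^b(\kA)$ of $F$. Since the hypotheses of Proposition \ref{prop:reductionOfRelations} hold for both $\kA$ and $\kB$, we obtain algebra homomorphisms $F_*:\sD\sH_\kA\lar\sD\sH_\kB$ and $G_*:\sD\sH_\kB\lar\sD\sH_\kA$. It then suffices to show that $G_*\circ F_*$ and $F_*\circ G_*$ are the identity. By Theorem \ref{thm:DrinfeldDoubleIsoTensorproduct}, the algebra $\sD\sH_\kA$ is generated by the elements $[X]\otimes 1$, $1\otimes[X]$ and $k_\alpha\otimes 1$. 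Because of the multiplicative decomposition \eqref{eq:multiplicativeDecomposition}, it is enough to check this on generators with $X\in\kA_i$ for some $i$, and on $k_X\otimes 1$ with $X\in\kA_i$, as these classes span $\sK_0(\kA)$.

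First I will relate the decompositions attached to $F$ and $G$. If $X\in\kA_i$, then $F(X)=B_X[i]$ with $B_X\in\kB_i$, so $G(B_X)\cong X[-i]$. Hence, with respect to $G$, $B_X$ lies in $\kB^{G}_{-i}$ and the associated object of $\kA$ is $X$ itself. Next I will compute $G_*F_*(1\otimes[X])$ for $i$ even:
\[
G_*\bigl(\langle B_X,B_X\rangle^i(1\otimes[B_X]k_{B_X}^i)\bigr)=\langle B_X,B_X\rangle^i\langle X,X\rangle^{-i}(1\otimes[X]k_X^{-i})(1\otimes k_{X}^{i})=1\otimes[X].
\]
For this I use $G_*(1\otimes k_{B})=1\otimes k_{G(B)}$ and the relation \eqref{eq:relationK0reduced}, together with $\langle B_X,B_X\rangle=\langle X,X\rangle$. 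The last equality holds because the Euler form is preserved up to the sign $(-1)^{2i}$ under the equivalence. The case of $i$ odd is analogous. There the two swaps $\pm$ cancel, and the factor $k$ reappears through \eqref{eq:relationK0reduced}, which turns $k_{B}\otimes 1$ into $1\otimes k_{-B}$. The other generators are handled the same way.

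For the $K_0$-part I will use that $F_*$ on $k_X\otimes1$ realises the map $\sK_0(\kA)\to\sK_0(\kB)$ induced by $F$. Indeed, $\overline{F(X)}=(-1)^i\overline{B_X}$, and by \eqref{eq:relationK0reduced} the element $1\otimes k_{B_X}$ equals $k_{-B_X}\otimes 1$. This makes the composition on $k$'s the identity, since $G\circ F\cong\id$ acts trivially on $K_0$.

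The main obstacle is sign bookkeeping. One has to make sure that the formulas in Proposition \ref{prop:algebraHomomorphism}, in particular the definition of $F_*(k_X\otimes 1)$ with its swap for odd $i$, are consistent with the identification $k_\alpha\otimes1=1\otimes k_{-\alpha}$, so that the exponents $\pm i$ and the scalars $\langle B,B\rangle^{\pm i}$ cancel exactly. I would first verify this on the shift functor using Lemma \ref{lemma:shiftFunctor}, and then reduce the general case to it.
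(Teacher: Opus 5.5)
Your proposal is correct and takes the paper's route: the paper treats the theorem as an immediate consequence of Proposition~\ref{prop:algebraHomomorphism} applied to $F$ and to a quasi-inverse $G$, and your generator-by-generator check that $G_*F_*$ and $F_*G_*$ are the identity (using $\langle B_X,B_X\rangle=\langle X,X\rangle$, $G_*(k_\beta\otimes 1)=k_{G(\beta)}\otimes 1$ and \eqref{eq:relationK0reduced}) supplies the bookkeeping the paper leaves implicit. Your closing plan to reduce to the shift functor via Lemma~\ref{lemma:shiftFunctor} is unnecessary, because the direct computation on generators already completes the argument.
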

\begin{remark}
This also corrects the proof of \cite[Theorem 3.9]{BurbanSchiffmann_HallAlgebraEllipticCurve}.
\end{remark}

\end{document}